\newtheorem{theorem}{Theorem}[section]
\newtheorem{corollary}[theorem]{Corollary}
\newtheorem{lemma}[theorem]{Lemma}
\newtheorem{proposition}[theorem]{Proposition}
\theoremstyle{definition}
\newtheorem{definition}[theorem]{Definition}
\newtheorem{example}[theorem]{Example}
\newtheorem{problem}[theorem]{Problem}
\theoremstyle{remark}
\newtheorem{remark}[theorem]{Remark}
\numberwithin{equation}{section}
\renewcommand{\p@enumii}{}
\DeclareMathOperator{\diam}{diam}
\newcommand{\RR}{\mathbb{R}}
\newcommand{\Id}{\operatorname{Id}}
\def\<#1>{\langle #1 \rangle}
\newbox\onebox
\newcommand{\coherent}[1]{\mathbin{\setbox\onebox=\hbox{$=$}\lower0.7\ht%
\onebox\hbox{$\stackrel{#1}{=}$}}}
\newcommand{\acr}{\newline\indent}
\begin{document}

\title{Ultrametric preserving functions and weak similarities of ultrametric spaces}
\author{Viktoriia Bilet}
\address{Institute of Applied Mathematics and Mechanics of NASU\acr
Dobrovolskogo str. 1, Slovyansk 84100, Ukraine}
\email{viktoriiabilet@gmail.com}
\thanks{Viktoriia Bilet and Oleksiy Dovgoshey were partially supported in the frame of project 0117U002165: Development of Mathematical Models, Numerical and Analytical Methods, and Algorithms for Solving Modern Problems of Biomedical Research.}

\author{Oleksiy Dovgoshey}
\address{Institute of Applied Mathematics and Mechanics of NASU\acr
Dobrovolskogo str. 1, Slovyansk 84100, Ukraine}
\email{oleksiy.dovgoshey@gmail.com}

\author{Ruslan Shanin}
\address{Odesa I. I. Mechnikov National University\acr
Dvoryanskaya str. 2, Odesa, 65082, Ukraine}
\email{ruslanshanin@gmail.com}

\subjclass[2010]{Primary 54E35; Secondary 26A48}
\keywords{ultrametric, pseudoultrametric, ultrametric preserving function, weak similarity, monotonic function.}

\begin{abstract}
Let \(WS(X, d)\) be the class of ultrametric spaces which are weakly similar to ultrametric space \((X, d)\). The main results of the paper completely describe the ultrametric spaces \((X, d)\) for which the equality
\[
\rho(x, y) = f(d(\Phi(x), \Phi(y)))
\]
holds for every \((Y, \rho) \in WS(X, d)\), every weak similarity \(\Phi \colon Y \to X\), and all \(x\), \(y \in Y\) with some ultrametric (pseudoultrametric) preserving function \(f\) depending on \(\Phi\).
\end{abstract}

\maketitle

\section{Introduction and basic definitions}

Recall some definitions from the theory of metric spaces. In what follows we write \(\RR^{+}\) for the set of all nonnegative real numbers.

\begin{definition}\label{d1.1}
A \textit{metric} on a set \(X\) is a function \(d\colon X\times X\rightarrow \RR^{+}\) such that for all \(x\), \(y\), \(z \in X\):
\begin{enumerate}
\item \((d(x,y) = 0) \Leftrightarrow (x=y)\);
\item \(d(x,y)=d(y,x)\);
\item \(d(x, y)\leq d(x, z) + d(z, y)\).
\end{enumerate}
A metric \(d\colon X\times X\rightarrow \RR^{+}\) is an ultrametric on \(X\) if
\begin{enumerate}
\item [\((iv)\)] \(d(x,y) \leq \max \{d(x,z),d(z,y)\}\)
\end{enumerate}
holds for all \(x\), \(y\), \(z \in X\).
\end{definition}

Inequality \((iv)\) is often called the {\it strong triangle inequality}. 

Let \(A\) be a subset of a metric space \((X, d)\), \(A \subseteq X\). The quantity
\begin{equation}\label{e2.1}
\diam A = \diam (A, d) =\sup\{d(x,y) \colon x, y\in A\}
\end{equation}
is the \emph{diameter} of \(A\). If the inequality \(\diam A < \infty\) holds, then we say that \(A\) is a \emph{bounded} subset of \(X\).

We define the \emph{distance set} \(D(X)\) of a metric space \((X,d)\) as
\[
D(X) = D(X, d) = \{d(x, y) \colon x, y \in X\}.
\]
It is clear that \(\diam X = \sup D(X)\).

The theory of ultrametric spaces is closely connected with various of investigations in mathematics, physics, linguistics, psychology and computer science. Some properties of ultrametric spaces have been studied in~\cite{DM2009, DD2010, DP2013SM, Groot1956, Lemin1984FAA, Lemin1984RMS39:5, Lemin1984RMS39:1, Lemin1985SMD32:3, Lemin1988, Lemin2003, Qiu2009pNUAA, Qiu2014pNUAA, BS2017, DM2008, DLPS2008TaiA, KS2012, Vau1999TP, Ves1994UMJ, Ibragimov2012, GH1961S, PTAbAppAn2014}. Note that the use of trees and tree-like structures gives a natural language for description of ultrametric spaces \cite{Carlsson2010, DLW, Fie, GV2012DAM, HolAMM2001, H04, BH2, Lemin2003, Bestvina2002, DDP2011pNUAA, DP2019PNUAA, DPT2017FPTA, PD2014JMS, DPT2015, Pet2018pNUAA,DP2018pNUAA}.

A function \(f \colon \RR^{+} \to \RR^{+}\) is called \emph{ultrametric preserving} if
\[
X \times X \xrightarrow{d} \RR^{+} \xrightarrow{f} \RR^{+}
\]
is an ultrametric on \(X\) for every ultrametric space \((X, d)\).

The following elegant theorem was obtained by Pongsriiam and Termwuttipong in~\cite{PTAbAppAn2014} and this can be considered as an initial motivation for present paper. 

\begin{theorem}\label{t5.21}
The following conditions are equivalent for every \(f \colon \RR^{+} \to \RR^{+}\):
\begin{enumerate}
\item \label{t5.21:s1} \(f\) is ultrametric preserving.
\item \label{t5.21:s2} \(f\) is increasing and the equality \(f(x) = 0\) holds if and only if \(x = 0\).
\end{enumerate}
\end{theorem}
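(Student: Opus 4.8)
The plan is to treat the two implications separately, using only the simplest ultrametric spaces, those with two or three points.

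For \((ii)\Rightarrow(i)\), assume \(f\) is increasing and satisfies \(f(t)=0\Leftrightarrow t=0\), and let \((X,d)\) be an arbitrary ultrametric space. Put \(\rho:=f\circ d\). Symmetry of \(\rho\) is inherited from \(d\); the condition \(\rho(x,y)=0\Leftrightarrow x=y\) follows from \(f(t)=0\Leftrightarrow t=0\) together with axiom \((i)\) for \(d\). For the strong triangle inequality the key observation is that an increasing function commutes with the binary maximum, \(f(\max\{a,b\})=\max\{f(a),f(b)\}\) for all \(a,b\in\RR^{+}\). Applying \(f\) to \(d(x,y)\le\max\{d(x,z),d(z,y)\}\) and using monotonicity then gives \(\rho(x,y)\le\max\{\rho(x,z),\rho(z,y)\}\). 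Hence \(\rho\) is an ultrametric, and since \((X,d)\) was arbitrary, \(f\) is ultrametric preserving.

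For \((i)\Rightarrow(ii)\), assume \(f\) is ultrametric preserving. To see that \(f(t)=0\Leftrightarrow t=0\), fix \(t>0\) and consider the two-point space \(X=\{a,b\}\) with \(d(a,b)=t\); since \(f\circ d\) must be an ultrametric and \(a\neq b\), we obtain \(f(t)>0\), while \(f(0)=0\) because \(f\circ d\) vanishes on the diagonal. It remains to show that \(f\) is increasing. Suppose, for contradiction, that there are \(s<t\) with \(f(s)>f(t)\). The case \(s=0\) is impossible, since then \(0=f(0)>f(t)\ge 0\); thus \(s>0\). Now take the three-point ``isosceles'' ultrametric space \(X=\{a,b,c\}\) with \(d(a,b)=d(a,c)=t\) and \(d(b,c)=s\); one checks directly that the strong triangle inequality holds for every triple, the two longer sides being equal. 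Applying \(f\) produces distances \(f(t),f(t),f(s)\), and the strong triangle inequality for the pair \(b,c\) forces \(f(s)=\rho(b,c)\le\max\{\rho(b,a),\rho(a,c)\}=f(t)\), contradicting \(f(s)>f(t)\).

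I do not anticipate a genuine obstacle here; the only point requiring care is the choice of test space in the converse direction. One must realize that a failure of monotonicity has to be exhibited through an ultrametric triangle in which a violation of the strong triangle inequality becomes unavoidable, and that the configuration with the two \emph{long} sides equal (not the two short ones, which would fail to be an ultrametric) does exactly this; verifying that this configuration is a genuine ultrametric space, in particular that \(s>0\) so that the three points are distinct, is where the argument could go wrong if handled carelessly.
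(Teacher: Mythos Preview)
Your argument is correct in both directions; the two- and three-point test spaces do exactly what is needed, and the check that the isosceles configuration with two long sides equal to \(t\) and short side \(s\in(0,t)\) is a genuine ultrametric is handled properly.

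Note, however, that the paper does not give its own proof of this theorem: it is stated as a known result of Pongsriiam and Termwuttipong with a citation to \cite{PTAbAppAn2014}, and is used as background. So there is no proof in the paper to compare against. Your write-up supplies a standard self-contained argument (essentially the one in the original reference), which is more than the present paper does.
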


Some facts related to ultrametric preserving functions can be found in \cite{Dov2020MS, SKP2020MS, VD2019a, PTAbAppAn2014}.

\begin{example}\label{ex5.24}
Let \(f \colon \RR^{+} \to \RR^{+}\) be defined as
\begin{equation}\label{ex5.24:e1}
f(t) = \frac{d^{*} t}{1+t}
\end{equation}
with \(d^{*} \in (0, \infty)\). Then \(f\) is increasing and \(f(t) = 0\) holds if and only if \(t=0\). Hence, \(f\) is ultrametric preserving by Theorem~\ref{t5.21}. Moreover, \(f \circ d\) is a metric for every metric space \((X, d)\) (Example~2 in \cite{Dobos1998}). Thus, \(f\) is also \emph{metric preserving}. Using the function \(f\) it is easy to show that, for every unbounded metric space \((Y, \rho)\), the metric space \((Y, \delta)\) with \(\delta = f \circ \rho\) is bounded and has the same topology as \((Y, \rho)\). 
\end{example}

The metric-preserving functions were detailed studied by J.~Dobo\v{s} and other mathematicians \cite{Wilson1935, BDMS1981, Borsik1988, Corazza1999, Das1989, Dobos1994, Dobos1995, Dobos1996, Dobos1996a, Dobos1998, Piotrowski2003, Pokorny1993, Termwuttipong2005, Vallin2000, Khemar2007, BFS2003BazAaG, Foertsch2002, HMCM1991, DM2013, DPK2014MS, KP2018MS, KPS2019IJoMaCS} but the properties of functions which preserve special type metrics remain little studied.

The following definition was introduced in \cite{DP2013AMH}.

\begin{definition}\label{d2.34}
Let \((X, d)\) and \((Y, \rho)\) be nonempty metric spaces. A mapping \(\Phi \colon X \to Y\) is a \emph{weak similarity} of \((X, d)\) and \((Y, \rho)\) if \(\Phi\) is bijective and there is a strictly increasing function \(\psi \colon D(Y) \to D(X)\) such that the following diagram
\begin{equation}\label{d2.34:e2}
\ctdiagram{
\ctv 0,25:{X \times X}
\ctv 100,25:{Y \times Y}
\ctv 0,-25:{D(X)}
\ctv 100,-25:{D(Y)}
\ctet 0,25,100,25:{\Phi \otimes \Phi}
\ctel 0,25,0,-25:{d}
\cter 100,25,100,-25:{\rho}
\ctet 100,-25,0,-25:{\psi}
}
\end{equation}
is commutative, i.e., the equality
\begin{equation}\label{d2.34:e1}
d(x, y) = \psi\left(\rho\bigl(\Phi(x), \Phi(y)\bigr)\right)
\end{equation}
holds for all \(x\), \(y \in X\).
\end{definition}

\begin{remark}\label{r5.32}
In Diagram~\eqref{d2.34:e2} (and other diagrams illustrating applications of weak similarities) we write \(\Phi \otimes \Phi\) for the mapping
\[
X \times X \ni \<x, y> \mapsto \<\Phi(x), \Phi(y)> \in Y \times Y.
\]
Moreover, for every metric \(d\) on \(X\) we use the symbol \(d \colon X \times X \to D(X)\) for the surjection induced by restricting codomain \(\RR^{+}\) of \(d\) to its range \(D(X)\).
\end{remark}

If \(\Phi \colon X \to Y\) is a weak similarity with commutative diagram~\eqref{d2.34:e2}, then we say that \((X, d)\) and \((Y, \rho)\) are \emph{weakly similar}, and \(\psi\) is the \emph{scaling function} of \(\Phi\). For every ultrametric space \((Z, \delta)\) we will denote by \(WS(Z, \delta)\) the class of all ultrametric spaces which are weakly similar to \((Z, \delta)\).

\begin{remark}\label{r5.29}
It follows directly from Definition~\ref{d2.34} that every scaling function is bijective and uniquely determined by corresponding weak similarity.
\end{remark}

\begin{example}\label{ex5.29}
Let \((X, d)\) and \((Y, \rho)\) be nonempty metric spaces. A mapping \(\Phi \colon X \to Y\) is a \emph{similarity}, if \(\Phi\) is bijective and there is a strictly positive number \(r\), the \emph{ratio} of \(\Phi\), such that
\[
\rho\bigl(\Phi(x), \Phi(y)\bigr) = rd(x, y)
\]
for all \(x\), \(y \in X\) (see, for example, \cite[p.~45]{Edgar1992}). It is clear that every isometry is a similarity with the ratio \(r = 1\) and every similarity is a weak similarity. Furthermore, a weak similarity \(\Phi \colon X \to Y\) with the scaling function \(\psi \colon D(Y) \to D(X)\) is an isometry if and only if \(\psi(t) = t\) holds for every \(t \in D(Y)\).
\end{example}

\begin{example}\label{ex5.32}
Let \(\Phi \colon X \to Y\) be a bijection and let \(d \colon X \times X \to \RR^{+}\) and \(\rho \colon Y \times Y \to \RR^{+}\) be some metrics. The mapping \(\Phi\) is a weak similarity of \((X, d)\) and \((Y, \rho)\) if and only if the equivalence
\begin{equation}\label{ex5.32:e1}
(d(x, y) \leqslant d(w, z)) \Leftrightarrow (\rho(\Phi(x), \Phi(y)) \leqslant \rho(\Phi(w), \Phi(z)))
\end{equation}
is valid for all \(x\), \(y\), \(z\), \(w \in X\).
\end{example}

\begin{remark}\label{r5.36}
Equivalence~\eqref{ex5.32:e1} evidently implies the validity of 
\begin{equation}\label{r5.36:e1}
(d(x, y) = d(w, z)) \Leftrightarrow (\rho(\Phi(x), \Phi(y)) = \rho(\Phi(w), \Phi(z))).
\end{equation}
The bijections \(\Phi \colon X \to Y\) satisfying~\eqref{r5.36:e1} for all \(x\), \(y\), \(z\), \(w \in X\) is said to be \emph{combinatorial similarities}. Some questions connected with the weak similarities and combinatorial similarities were studied in \cite{DovBBMSSS2020, DLAMH2020, Dov2019IEJA}. The weak similarities of finite ultrametric and semimetric spaces were also considered by E.~Petrov in \cite{Pet2018pNUAA}.
\end{remark}

The useful generalization of the concept of ultrametric is the concept of pseudoultrametric.

\begin{definition}\label{d5.43}
Let \(X\) be a set and let \(d \colon X \times X \to \RR^{+}\) be a symmetric function such that \(d(x, x) = 0\) holds for every \(x \in X\). The function \(d\) is a \emph{pseudoultrametric} on \(X\) if it satisfies the strong triangle inequality.
\end{definition}

If \(d\) is a pseudoultrametric on \(X\), then we will say that \((X, d)\) is a \emph{pseudoultrametric} \emph{space}.

Every ultrametric space is a pseudoultrametric space but not conversely. In contrast to ultrametric spaces, pseudoultrametric spaces can contain some distinct points with zero distance between them.

\begin{definition}\label{d5.44}
A function \(f \colon \RR^{+} \to \RR^{+}\) is \emph{pseudoultrametric preserving} if \(f \circ d\) is a pseudoultrametric for every pseudoultrametric space \((X, d)\).
\end{definition}

In the case of pseudoultrametric preserving functions, the Pongsriiam---Termwuttipong theorem can be modified as follows.

\begin{proposition}\label{p5.45}
The following conditions are equivalent for every \(f \colon \RR^{+} \to \RR^{+}\):
\begin{enumerate}
\item \label{p5.45:s1} The function \(f\) is increasing and \(f(0) = 0\) holds.
\item \label{p5.45:s2} The function \(f\) is pseudoultrametric preserving.
\end{enumerate}
\end{proposition}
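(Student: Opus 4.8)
The plan is to establish both implications directly, following the pattern of the proof of Theorem~\ref{t5.21} but taking advantage of the fact that a pseudoultrametric tolerates distinct points at distance zero, which is precisely why the condition on \(f\) relaxes here.

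For the implication \((i) \Rightarrow (ii)\), I would assume that \(f\) is increasing with \(f(0) = 0\), take an arbitrary pseudoultrametric space \((X, d)\), and verify the three defining properties of Definition~\ref{d5.43} for \(g = f \circ d\). Symmetry of \(g\) follows from symmetry of \(d\), and \(g(x, x) = f(d(x, x)) = f(0) = 0\) holds by hypothesis. For the strong triangle inequality, fix \(x, y, z \in X\); assuming without loss of generality that \(d(x, z) \geqslant d(z, y)\), the strong triangle inequality for \(d\) gives \(d(x, y) \leqslant \max\{d(x, z), d(z, y)\} = d(x, z)\), and applying the increasing function \(f\) yields \(g(x, y) \leqslant g(x, z) = \max\{g(x, z), g(z, y)\}\). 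Hence \(g\) is a pseudoultrametric, so \(f\) is pseudoultrametric preserving.

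For \((ii) \Rightarrow (i)\), I would assume \(f\) is pseudoultrametric preserving. Applying \(f\) to a one-point pseudoultrametric space immediately gives \(f(0) = 0\). To show \(f\) is increasing, suppose not: choose \(s, t \in \RR^{+}\) with \(s < t\) but \(f(s) > f(t)\). On the three-point set \(X = \{x, y, z\}\) define a symmetric function \(d\) with \(d(p, p) = 0\) by \(d(x, z) = d(x, y) = t\) and \(d(z, y) = s\). A short case analysis shows \(d\) obeys the strong triangle inequality — in each triple of distinct points the two largest distances coincide — so \((X, d)\) is a pseudoultrametric space (indeed an ultrametric space when \(s > 0\)). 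But then \((f \circ d)(z, y) = f(s) > f(t) = \max\{(f \circ d)(z, x), (f \circ d)(x, y)\}\), so \(f \circ d\) violates the strong triangle inequality, contradicting that \(f\) is pseudoultrametric preserving. Therefore \(f\) is increasing.

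The argument is largely routine; the only point needing care is the verification that the three-point function \(d\) in the converse really is a pseudoultrametric, which rests on the ``isosceles'' character of the strong triangle inequality. It is worth emphasising where the proof departs from that of Theorem~\ref{t5.21}: since pseudoultrametrics permit \(d(u, v) = 0\) for \(u \neq v\), a value \(t > 0\) with \(f(t) = 0\) causes no obstruction — it merely collapses the corresponding distances to zero — so the requirement ``\(f(x) = 0 \Leftrightarrow x = 0\)'' of Theorem~\ref{t5.21} is correctly weakened here to ``\(f(0) = 0\)''.
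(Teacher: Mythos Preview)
Your proof is correct and self-contained. The paper, by contrast, gives no argument at all for this proposition: it simply refers the reader to Proposition~2.4 in~\cite{Dov2020MS}. So there is nothing to compare at the level of technique; your direct verification of the pseudoultrametric axioms in one direction, and the three-point counterexample in the other, is exactly the expected elementary argument and would serve well as an in-text proof where the paper chose a citation.
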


For the proof see Proposition~2.4 \cite{Dov2020MS}.

\begin{remark}\label{r5.47}
Theorem~\ref{t5.21} and Proposition~\ref{p5.45} imply that every ultrametric preserving function is pseudoultrametric preserving. Moreover, a pseudoultrametric preserving \(f \colon \RR^{+} \to \RR^{+}\) is ultrametric preserving if and only if \(f^{-1}(0) = \{0\}\) holds.
\end{remark}

The goal of the present paper is a description of interrelations between ultrametric preserving functions and weak similarities of ultrametric spaces. In particular, we solve the following.

\begin{problem}\label{pr1.13}
Describe the ultrametric spaces \((X, d)\) for which every \((Z, \delta) \in WS(X, d)\) and every weak similarity \(\Phi \colon Z \to X\) admit an ultrametric preserving (pseudoultrametric preserving or strictly increasing ultrametric preserving) function \(f \colon \RR^{+} \to \RR^{+}\) such that the diagram
\begin{equation}\label{pr1.13:e1}
\ctdiagram{
\ctv 0,25:{X \times X}
\ctv 100,25:{D(X)}
\ctv 200,25:{\RR^{+}}
\ctv 0,-25:{Z \times Z}
\ctv 100,-25:{D(Z)}
\ctv 200,-25:{\RR^{+}}
\ctet 0,25,100,25:{d}
\ctet 100,25,200,25:{\Id_{D(X)}}
\ctet 0,-25,100,-25:{\delta}
\ctet 100,-25,200,-25:{\Id_{D(Z)}}
\ctel 0,-25,0,25:{\Phi \otimes \Phi}
\ctel 100,25,100,-25:{\psi}
\cter 200,25,200,-25:{f}
}
\end{equation}
is commutative, where \(\psi\) is the scaling function of \(\Phi\), \(\Id_{D(X)}\), \(\Id_{D(Z)}\) are the identical embeddings of the distance sets \(D(X)\) and, respectively, \(D(Z)\) in \(\RR^{+}\).
\end{problem}

The main results of the paper, Theorem~\ref{t5.46}, Theorem~\ref{t5.51} and Theorem~\ref{t5.34}, give a solution of Problem~\ref{pr1.13} for the case when \(f\) is pseudoultrametric preserving, ultrametric preserving and strictly increasing ultrametric preserving, respectively. Moreover, in Theorems~\ref{t2.25} and \ref{t2.35} we characterize bounded ultrametric spaces which are weakly similar to unbounded ones.

\begin{remark}\label{r5.67}
The concept of weak similarity is closely related to the ordinal scaling, multidimensional scaling and ranking that appear naturally in many applied researches \cite{Agarwal2007, Borg2005, Jamieson2011, Kleindessner2014, KruP1964, QYJ2004, Rosales2006, SheP1962, SheJoMP1966, Wauthier2013}. The presence of these relationships and, thus, the existence of potential applications, makes the study of weak similarities more important and promising.
\end{remark}

\section{Extension of scaling functions to ultrametric preserving functions}

The above formulated Pongsriiam---Termwuttipong theorem can be partially generalized as follows.

\begin{lemma}\label{l5.22}
Let \((X, d)\) be a nonempty ultrametric space with the distance set \(D(X)\) and let \(f \colon D(X) \to \RR^{+}\) be an increasing function. Then the mapping
\[
X \times X \ni \<x, y> \mapsto f(d(x, y)) \in \RR^{+}
\]
is an ultrametric on \(X\) if and only if the equation \(f(t) = 0\) has the unique solution \(t=0\).
\end{lemma}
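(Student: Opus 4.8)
The plan is to reduce the statement to the Pongsriiam--Termwuttipong theorem (Theorem~\ref{t5.21}) by extending the increasing function $f \colon D(X) \to \RR^{+}$ to an increasing function $\tilde f \colon \RR^{+} \to \RR^{+}$ with $\tilde f^{-1}(0) = \{0\}$, while keeping track of the effect of such an extension on the strong triangle inequality restricted to the values actually attained by $d$.

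First I would dispose of the easy direction. Suppose the map $\<x,y> \mapsto f(d(x,y))$ is an ultrametric on $X$. If $(X,d)$ has more than one point, pick $x \ne y$; then $0 = f(d(x,x)) = f(0)$, and $f(d(x,y)) > 0$ forces $f(t) > 0$ for the value $t = d(x,y) \in D(X)$. To get $f(t) = 0 \Rightarrow t = 0$ for \emph{every} $t \in D(X)$, note that any nonzero $t \in D(X)$ is realized as $t = d(x,y)$ for some $x \ne y$, so positivity of the induced ultrametric at that pair gives $f(t) > 0$. If $X$ is a singleton, $D(X) = \{0\}$ and the statement is vacuous in both directions, so I would either assume $|X| \ge 2$ throughout or handle the singleton as a trivial case; I expect the paper's convention makes $D(X) \ni 0$ always, so $f(0) = 0$ is forced by $f(d(x,x)) = 0$ regardless.

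For the converse, assume $f(t) = 0$ has only the solution $t = 0$ in $D(X)$; combined with $f$ increasing this means $f$ is positive on $D(X) \setminus \{0\}$ and $f(0) = 0$. Symmetry and the vanishing-on-the-diagonal condition for $f \circ d$ are immediate. The only thing to check is the strong triangle inequality
\[
f(d(x,y)) \le \max\{f(d(x,z)), f(d(z,y))\}
\]
for all $x,y,z \in X$. Here I would use the strong triangle inequality for $d$ itself: $d(x,y) \le \max\{d(x,z), d(z,y)\}$, and all three quantities $d(x,y), d(x,z), d(z,y)$ lie in $D(X)$, so applying the increasing function $f$ and using that $f$ respects the max on its domain gives exactly what is needed. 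The point is that $f$ need only be monotone and zero-preserving \emph{on $D(X)$} — we never need to extend it — and monotone functions commute with $\max$ of elements of their domain. So this direction does not actually require invoking Theorem~\ref{t5.21}; it is a direct, short argument.

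The main (minor) obstacle is bookkeeping around the degenerate case and around the phrase ``the equation $f(t) = 0$ has the unique solution $t = 0$'': one must be careful that $f$ is only defined on $D(X)$, so ``unique solution $t = 0$'' means uniqueness within $D(X)$, and that $0 \in D(X)$ whenever $X \ne \varnothing$ so the solution $t=0$ indeed exists. Once that is pinned down, both implications are essentially immediate from the strong triangle inequality for $d$ together with monotonicity of $f$; there is no real analytic content, only the observation that an increasing function preserves both the order and the binary operation $\max$ on any subset of its domain. If the authors prefer, the converse can alternatively be phrased as: extend $f$ to $\tilde f \colon \RR^{+} \to \RR^{+}$ by $\tilde f(t) = \sup\{f(s) : s \in D(X),\, s \le t\}$ (with $\tilde f(0) = 0$), check $\tilde f$ is increasing with $\tilde f^{-1}(0) = \{0\}$, invoke Theorem~\ref{t5.21} to conclude $\tilde f$ is ultrametric preserving, and note $\tilde f \circ d = f \circ d$; but the direct argument above is cleaner.
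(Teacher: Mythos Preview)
Your direct argument is correct and complete: symmetry and positivity are immediate, and the strong triangle inequality follows because $\max\{d(x,z),d(z,y)\}$ is one of the two values $d(x,z)$, $d(z,y)$, hence lies in $D(X)$, so monotonicity of $f$ on $D(X)$ suffices. The paper does not give a proof at all; it simply cites Theorem~5.6 of \cite{VD2019a}. Your argument is therefore more elementary and self-contained than what the paper offers, and it makes transparent the only real point, namely that an increasing function commutes with $\max$ on its domain.

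One small warning about your \emph{alternative} route via the extension $\tilde f(t)=\sup\{f(s):s\in D(X),\ s\le t\}$: as stated, the claim $\tilde f^{-1}(0)=\{0\}$ can fail. If $0$ is an isolated point of $D(X)$ (say $D(X)=\{0,1,2\}$), then $\tilde f(t)=f(0)=0$ for all $t\in[0,1)$, so $\tilde f$ is only pseudoultrametric preserving, not ultrametric preserving. This does not affect your main proof, since you correctly identify the direct argument as the cleaner one and do not rely on the extension; but if you wanted to rescue the extension approach you would need to patch $\tilde f$ on the gap $(0,\inf(D(X)\setminus\{0\}))$ by hand (e.g.\ linearly), exactly the kind of surgery the paper later performs in the proof of Theorem~\ref{t5.34}. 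Also, your remark that the singleton case is ``vacuous in both directions'' is slightly off: when $|X|=1$ both sides reduce to the single condition $f(0)=0$, which is nontrivial but equivalent on each side, so the biconditional still holds.
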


\begin{proof}
It directly follows form Theorem~5.6 \cite{VD2019a}.
\end{proof}

\begin{theorem}\label{t2.25}
Let \((X, d)\) be an unbounded ultrametric space, let \(d^{*} \in (0, \infty)\) and \(\rho \colon X \times X \to \RR^{+}\) be defined as
\begin{equation}\label{t2.25:e1}
\rho(x, y) = \frac{d^{*} \cdot d(x, y)}{1 + d(x, y)}.
\end{equation}
Then \((X, \rho)\) is a bounded ultrametric space and the distance set \((D(X, \rho), {\leqslant})\) does not have the largest element.

Conversely, let \((X, \rho)\) be a nonempty ultrametric space such that \((D(X, \rho), {\leqslant})\) does not have the largest element. Write \(d^{*} = \diam (X, \rho)\). Then there is an unbounded ultrametric space \((X, d)\) such that~\eqref{t2.25:e1} holds for all \(x\), \(y \in X\).
\end{theorem}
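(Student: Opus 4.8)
The plan is to prove both assertions with the single bijection $f\colon\RR^{+}\to[0,d^{*})$, $f(t)=\frac{d^{*}t}{1+t}$, and its inverse $g\colon[0,d^{*})\to\RR^{+}$, $g(s)=\frac{s}{d^{*}-s}$, noting that $f$ and $g$ are strictly increasing and vanish only at $0$. For the direct statement I would argue as follows. Since $f$ is strictly increasing and $f(t)=0$ only for $t=0$, Theorem~\ref{t5.21} (see also Example~\ref{ex5.24}) shows that $f$ is ultrametric preserving, so $\rho=f\circ d$ really is an ultrametric on $X$. Boundedness of $(X,\rho)$ is immediate from $f(t)<d^{*}$ for every $t\in\RR^{+}$, which gives $\diam(X,\rho)\leqslant d^{*}<\infty$. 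For the last assertion I would use that $D(X,\rho)=f(D(X,d))$: since $(X,d)$ is unbounded, $D(X,d)$ has no upper bound, so every $a\in D(X,d)$ is exceeded by some $b\in D(X,d)$, whence $f(b)>f(a)$ by strict monotonicity of $f$; thus no element of $D(X,\rho)$ is largest.

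For the converse I would first note that, $(D(X,\rho),{\leqslant})$ having no largest element, $D(X,\rho)\neq\{0\}$ and hence $d^{*}=\diam(X,\rho)>0$; the finiteness $d^{*}<\infty$ is tacitly assumed in~\eqref{t2.25:e1} and amounts to saying that $(X,\rho)$ is one of the bounded spaces the theorem is about. Then $d^{*}\notin D(X,\rho)$ — otherwise $d^{*}$ would be the largest element — so $\rho(x,y)\in[0,d^{*})$ for all $x$, $y\in X$, and I may set $d(x,y)=g(\rho(x,y))$. Applying Lemma~\ref{l5.22} to the restriction $g|_{D(X,\rho)}\colon D(X,\rho)\to\RR^{+}$, which is increasing and solves $g(t)=0$ only at $t=0$, shows that $d$ is an ultrametric on $X$. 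A direct computation gives $f(g(s))=s$ for $s\in[0,d^{*})$ and $g(f(t))=t$ for $t\in\RR^{+}$, so $\frac{d^{*}d(x,y)}{1+d(x,y)}=f\bigl(g(\rho(x,y))\bigr)=\rho(x,y)$, i.e.\ \eqref{t2.25:e1} holds. Finally, to see that $(X,d)$ is unbounded, fix $M>0$; since $f(M)<d^{*}=\sup D(X,\rho)$ there is $s\in D(X,\rho)$ with $s>f(M)$, and then $g(s)\in D(X,d)$ satisfies $g(s)>g(f(M))=M$, so $\sup D(X,d)=\infty$.

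I do not expect a serious obstacle. The two points that genuinely need attention are that the supremum $d^{*}=\sup D(X,\rho)$ is \emph{not attained} — so that $\rho$ keeps its values inside the domain $[0,d^{*})$ on which $g$ is defined and inverts $f$ — and the tacit finiteness hypothesis $\diam(X,\rho)<\infty$ in the converse, without which~\eqref{t2.25:e1} could not hold for any unbounded $(X,d)$. Everything else, in particular the strong triangle inequalities for $\rho$ and for $d$, requires no separate verification, being delegated to Theorem~\ref{t5.21} and Lemma~\ref{l5.22}.
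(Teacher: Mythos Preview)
Your proof is correct and follows essentially the same route as the paper: the same bijection $f(t)=\frac{d^{*}t}{1+t}$ with inverse $g(s)=\frac{s}{d^{*}-s}$, the appeal to Theorem~\ref{t5.21}/Example~\ref{ex5.24} for the direct part, and Lemma~\ref{l5.22} for the converse. The only cosmetic differences are that the paper establishes unboundedness of $(X,d)$ via a sequence $\rho(x_n,y_n)\to d^{*}$ and the limit $\lim_{s\to d^{*}}g(s)=+\infty$, whereas you argue directly with an arbitrary bound $M$; and your remark that the finiteness $d^{*}<\infty$ is a tacit hypothesis in the converse is well taken --- the paper's own proof silently inserts ``bounded'' at that point.
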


\begin{proof}
It was noted in Example~\ref{ex5.24} that the function \(f\) defined by~\eqref{ex5.24:e1} is ultrametric preserving. Hence, the mapping \(\rho \colon X \times X \to \RR^{+}\), defined by~\eqref{t2.25:e1}, is an ultrametric. Moreover, since \(f\) is strictly increasing and satisfies the equality
\begin{equation*}
\lim_{t \to \infty} f(t) = d^{*},
\end{equation*}
we have
\[
\rho(x, y) < \lim_{t \to \infty} f(t) = d^{*} = \diam (X, \rho)
\]
for all \(x\), \(y \in X\). Thus, we have \(\diam (X, \rho) \notin D(X, \rho)\), i.e., \((D(X, \rho), {\leqslant})\) does not have the largest element.

Conversely, let \((X, \rho)\) be a bonded ultrametric space such that \(\diam (X, \rho) \notin D(X, \rho)\). Write \(d^{*} = \diam (X, \rho)\). The condition \(\diam (X, \rho) \notin D(X, \rho)\) and boundedness of \((X, \rho)\) imply that \(d^{*} \in (0, \infty)\). The function \(g \colon [0, d^{*}) \to \RR^{+}\),
\begin{equation}\label{t2.25:e7}
g(s) = \frac{s}{d^{*} - s},
\end{equation}
is strictly increasing and satisfies the equalities
\begin{equation}\label{t2.25:e5}
g(0) = 0 \quad \text{and} \quad \lim_{\substack{s \to d^{*} \\ s \in [0, d^{*})}} g(s) = +\infty.
\end{equation}
Consequently, there are sequences \((x_n)_{n \in \mathbb{N}}\) and \((y_n)_{n \in \mathbb{N}}\) of points of \(X\) such that
\begin{equation}\label{t2.25:e8}
\lim_{n \to \infty} \rho(x_n, y_n) = d^{*}.
\end{equation}
Since we have \(\rho(x, y) < d^{*}\) for all \(x\), \(y \in X\), the inclusion \(D(X, \rho) \subseteq [0, d^{*})\) holds. Now Lemma~\ref{l5.22} implies that the mapping \(d \colon X \times X \to \RR^{+}\) satisfying the equality
\[
d(x, y) = g(\rho(x, y))
\]
for all \(x\), \(y \in X\) is an ultrametric on~\(X\). From the second equality in \eqref{t2.25:e5} and equality~\eqref{t2.25:e8} it follows that \((X, d)\) is unbounded. A direct calculation shows
\begin{equation}\label{t2.25:e6}
f(g(s)) = s \quad \text{and} \quad g(f(t)) = t
\end{equation}
for all \(s \in [0, d^{*})\) and \(t \in [0, +\infty)\), where \(f\) is defined by~\eqref{ex5.24:e1}. Now equality \eqref{t2.25:e1} follows from \eqref{t2.25:e6}.
\end{proof}

\begin{lemma}\label{l5.29}
Let \((X, d)\), \((Y, \rho)\) and \((Z, \delta)\) be nonempty metric spaces and mappings \(\Phi \colon X \to Y\) and \(\Psi \colon Y \to Z\) be weak similarities with the scaling functions \(f \colon D(Y) \to D(X)\) and \(g \colon D(Z) \to D(Y)\), respectively. Then the mapping
\[
X \xrightarrow{\Phi} Y \xrightarrow{\Psi} Z
\]
is a weak similarity of \((X, d)\) and \((Z, \delta)\), and the function 
\[
D(Z) \xrightarrow{g} D(Y) \xrightarrow{f} D(X)
\]
is the scaling function of this weak similarity. Moreover, the inverse mapping \(\Phi^{-1} \colon Y \to X\) of \(\Phi\) is also a weak similarity, the scaling function \(f \colon D(Y) \to D(X)\) of \(\Phi\) is an order isomorphism of the posets \((D(Y), {\leqslant})\) and \((D(X), {\leqslant})\) such that the inverse isomorphism \(f^{-1} \colon D(X) \to D(Y)\) is the scaling function of the weak similarity \(\Phi^{-1}\).
\end{lemma}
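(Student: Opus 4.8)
The plan is to verify the three assertions in order, directly from Definition~\ref{d2.34}; the argument is essentially bookkeeping, and the only point deserving a moment's attention is the surjectivity of a scaling function, which is exactly where the convention of Remark~\ref{r5.32} (that \(d\) surjects onto \(D(X)\)) is used.

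\emph{Composition.} Since \(\Phi\) and \(\Psi\) are bijections, \(\Psi \circ \Phi \colon X \to Z\) is a bijection. Applying~\eqref{d2.34:e1} first to \(\Phi\) and then to \(\Psi\), I get for all \(x\), \(y \in X\)
\[
d(x, y) = f\bigl(\rho(\Phi(x), \Phi(y))\bigr) = f\bigl(g\bigl(\delta(\Psi(\Phi(x)), \Psi(\Phi(y)))\bigr)\bigr) = (f \circ g)\bigl(\delta((\Psi \circ \Phi)(x), (\Psi \circ \Phi)(y))\bigr).
\]
The composition \(f \circ g \colon D(Z) \to D(X)\) of two strictly increasing functions is strictly increasing and has the required domain and codomain, so it is the scaling function of \(\Psi \circ \Phi\).

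\emph{Scaling functions are order isomorphisms.} Next I would check that an arbitrary scaling function \(f \colon D(Y) \to D(X)\) is an order isomorphism. As \(D(X)\) and \(D(Y)\) are subsets of \(\RR^{+}\) and hence totally ordered, a strictly increasing \(f\) is automatically an order embedding (in particular injective): if \(f(a) \leqslant f(b)\) then \(b < a\) is impossible, so \(a \leqslant b\). For surjectivity, fix \(t \in D(X)\); by Remark~\ref{r5.32} there are \(x\), \(y \in X\) with \(t = d(x, y)\), and~\eqref{d2.34:e1} gives \(t = f\bigl(\rho(\Phi(x), \Phi(y))\bigr)\) with \(\rho(\Phi(x), \Phi(y)) \in D(Y)\). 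Hence \(f\) is bijective, so it is an order isomorphism of \((D(Y), {\leqslant})\) onto \((D(X), {\leqslant})\), and its inverse \(f^{-1} \colon D(X) \to D(Y)\) is again an order isomorphism, in particular strictly increasing.

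\emph{Inverse.} Finally \(\Phi^{-1} \colon Y \to X\) is a bijection, and applying~\eqref{d2.34:e1} to \(x = \Phi^{-1}(y_1)\), \(y = \Phi^{-1}(y_2)\) gives \(d\bigl(\Phi^{-1}(y_1), \Phi^{-1}(y_2)\bigr) = f\bigl(\rho(y_1, y_2)\bigr)\) for all \(y_1\), \(y_2 \in Y\); composing with \(f^{-1}\) yields \(\rho(y_1, y_2) = f^{-1}\bigl(d(\Phi^{-1}(y_1), \Phi^{-1}(y_2))\bigr)\). Since \(f^{-1} \colon D(X) \to D(Y)\) is strictly increasing, this exhibits \(\Phi^{-1}\) as a weak similarity with scaling function \(f^{-1}\), completing the proof. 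There is no real obstacle here; the surjectivity step is the only place one must remember that the codomain of a scaling function is the honest range of the metric rather than all of \(\RR^{+}\).
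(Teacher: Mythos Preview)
Your proof is correct and follows essentially the same direct-verification approach as the paper: both reduce the composition claim to commutativity of the obvious diagram and derive the inverse claim from the equivalence \(d = f \circ \rho \circ (\Phi \otimes \Phi) \Leftrightarrow \rho = f^{-1} \circ d \circ (\Phi^{-1} \otimes \Phi^{-1})\). The only difference is that you spell out the surjectivity/order-isomorphism argument explicitly, whereas the paper defers it to Remark~\ref{r5.29}.
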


\begin{proof}
The first part of the lemma follows from the commutativity of the diagram
\[
\ctdiagram{
\ctv 0,50:{X \times X}
\ctv 100,50:{Y \times Y}
\ctv 200,50:{Z \times Z}
\ctv 0,0:{D(X)}
\ctv 100,0:{D(Y)}
\ctv 200,0:{D(Z)}
\ctet 0,50,100,50:{\Phi \otimes \Phi}
\ctet 100,50,200,50:{\Psi \otimes \Psi}
\ctet 100,0,0,0:{f}
\ctet 200,0,100,0:{g}
\ctel 0,50,0,0:{d}
\ctel 100,50,100,0:{\rho}
\ctel 200,50,200,0:{\delta}
}.
\]

To prove the second part of the lemma, it suffices to use Remark~\ref{r5.29} and to note that that the equality
\[
d = f \circ \rho \circ (\Phi \otimes \Phi)
\]
holds if and only if we have \(f^{-1} \circ d \circ (\Phi^{-1} \otimes \Phi^{-1}) = \rho\).
\end{proof}

\begin{remark}\label{r5.31}
For more detailed proof of Lemma~\ref{l5.29} in more general case of semimetric spaces see Proposition~1.2 in \cite{DP2013AMH}.
\end{remark}

\begin{proposition}\label{p5.32}
Let \(f \colon \RR^{+} \to \RR^{+}\) be an ultrametric preserving function. Then the following conditions are equivalent:
\begin{enumerate}
\item \label{p5.32:s1} The function \(f\) is strictly increasing.
\item \label{p5.32:s2} The membership \((X, f \circ d) \in WS(X, d)\) is valid for every nonempty ultrametric space \((X, d)\).
\end{enumerate}
\end{proposition}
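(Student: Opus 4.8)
The plan is to prove the two implications separately, using throughout that an ultrametric preserving \(f\) is, by Theorem~\ref{t5.21}, an increasing function with \(f(t) = 0 \iff t = 0\); in particular \(f(t) > 0\) for every \(t > 0\).

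For \((i) \Rightarrow (ii)\), fix a nonempty ultrametric space \((X, d)\). Since \(f\) is ultrametric preserving, \(f \circ d\) is an ultrametric on \(X\), so \((X, f \circ d)\) is an ultrametric space and it makes sense to ask whether it lies in \(WS(X, d)\). I would take \(\Phi = \Id_{X} \colon X \to X\) as the candidate weak similarity of \((X, f \circ d)\) and \((X, d)\). Because \(f\) is strictly increasing, its restriction to the distance set \(D(X, d)\) is a strictly increasing bijection of \(D(X, d)\) onto \(f(D(X, d)) = D(X, f \circ d)\); let \(\psi\) denote the inverse of this restriction, so \(\psi \colon D(X, f \circ d) \to D(X, d)\) is strictly increasing. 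Then for all \(x, y \in X\) we have \(\psi\bigl((f \circ d)(x, y)\bigr) = \psi\bigl(f(d(x, y))\bigr) = d(x, y)\), which is exactly the commutativity required in Definition~\ref{d2.34}. Hence \(\Id_{X}\) is a weak similarity with scaling function \(\psi\), and \((X, f \circ d) \in WS(X, d)\).

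For \((ii) \Rightarrow (i)\), I would argue by contraposition. Assume \(f\) is not strictly increasing; since \(f\) is increasing there are reals \(a\), \(b\) with \(0 \leq a < b\) and \(f(a) = f(b)\), and as \(f(b) > 0\) we must have \(a > 0\). Consider the three-point set \(X = \{p, q, r\}\) with \(d(p, q) = a\) and \(d(p, r) = d(q, r) = b\); a direct check (each triple of mutual distances equals \(\{a, b, b\}\) with \(a < b\)) shows \(d\) is an ultrametric with \(D(X, d) = \{0, a, b\}\), a three-element set. On the other hand \(f \circ d\) assigns to every pair of distinct points of \(X\) the common value \(c := f(a) = f(b)\), and \(c > 0\), so \(D(X, f \circ d) = \{0, c\}\) is a two-element set. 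By Remark~\ref{r5.29} the scaling function of any weak similarity is a bijection between the two distance sets, and being weakly similar is a symmetric relation by Lemma~\ref{l5.29}; hence \((X, f \circ d)\) and \((X, d)\) cannot be weakly similar, i.e.\ \((X, f \circ d) \notin WS(X, d)\), contradicting \((ii)\).

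The only delicate point, which I regard as the main (and rather mild) obstacle, is ensuring that the failure of strict monotonicity really produces a witnessing pair \(a < b\) with \(a > 0\): this is what guarantees \(c = f(a) > 0\), so that \(D(X, f \circ d)\) has two elements and the cardinality obstruction bites. Here the hypothesis \(f^{-1}(0) = \{0\}\) from ultrametric preservation is used essentially — a merely pseudoultrametric preserving \(f\) could be constant \(0\) on some interval \([0, b]\), and then the argument would need adjustment. Everything else reduces to routine bookkeeping of distance sets.
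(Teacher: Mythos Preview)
Your proof is correct and essentially identical to the paper's: the identity map witnesses the weak similarity for $(i) \Rightarrow (ii)$, and the same three-point ultrametric space with $|D(X,d)| = 3 \neq 2 = |D(X, f\circ d)|$ gives the cardinality obstruction for $(ii) \Rightarrow (i)$. One cosmetic slip: with $\Phi = \Id \colon (X, f\circ d) \to (X, d)$, Definition~\ref{d2.34} requires the scaling function to go $D(X,d) \to D(X, f\circ d)$, so you should take $\psi = f|_{D(X,d)}$ itself (as the paper does) rather than its inverse --- or, equivalently, declare $\Phi \colon (X,d) \to (X, f\circ d)$, which your verified identity $\psi\bigl((f\circ d)(x,y)\bigr) = d(x,y)$ actually matches.
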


\begin{proof}
\(\ref{p5.32:s1} \Rightarrow \ref{p5.32:s2}\). Suppose \ref{p5.32:s1} holds. Let \((X, d)\) be a nonempty ultrametric space with the distance set \(D(X, d)\). Then \((X, f \circ d)\) is an ultrametric space with the distance set 
\[
D(X, f \circ d) = f(D(X, d)).
\]
Since \(f\) is strictly increasing, the identical function \(\operatorname{Id} \colon X \to X\) is a weak similarity of \((X, f \circ d)\) and \((X, d)\), and the restriction \(f|_{D(X, d)}\) is the corresponding scaling function,
\[
\ctdiagram{
\ctv 0,50:{X \times X}
\ctv 100,50:{X \times X}
\ctv 0,0:{D(X, f \circ d)}
\ctv 100,0:{D(X, d)}
\ctet 0,50,100,50:{\operatorname{Id} \otimes \operatorname{Id}}
\ctel 0,50,0,0:{f \circ d}
\cter 100,50,100,0:{d}
\ctet 100,0,0,0:{f|_{D(X, d)}}
}.
\]

\(\ref{p5.32:s2} \Rightarrow \ref{p5.32:s1}\). Suppose we have \ref{p5.32:s2} but \ref{p5.32:s1} does not hold. Then there are points \(a\), \(b \in \RR^{+}\) such that \(0 < a < b < \infty\) and \(f(a) = f(b)\). Let \(X = \{x_1, x_2, x_3\}\) be a tree-point set and let an ultrametric \(d \colon X \times X \to \RR^{+}\) satisfy the equalities
\[
d(x_1, x_2) = a, \quad d(x_1, x_3) = d(x_2, x_3) = b.
\]
Then \(f \circ d\) is an ultrametric on \(X\) such that 
\[
f(d(x_1, x_2)) = f(d(x_1, x_3)) = f(d(x_2, x_3)).
\]
Thus, we have
\begin{equation}\label{p5.32:e1}
|D(X, d)| = 3 \neq 2 = |D(X, f \circ d)|.
\end{equation}
It was noted in Remark~\ref{r5.29} that every scaling function is bijective. Hence, if \((X, d)\) and \((X, f \circ d)\) are weakly similar, then the equality
\[
|D(X, d)| = |D(X, f \circ d)|
\]
holds, contrary to \eqref{p5.32:e1}.
\end{proof}

Using the concept of weak similarity we can give a more compact variant of Theorem~\ref{t2.25}.

\begin{theorem}\label{t2.35}
Let \((X, d)\) be a nonempty ultrametric space. Then the following statements are equivalent:
\begin{enumerate}
\item\label{t2.35:s1} \((X, d)\) is weakly similar to an unbounded ultrametric space.
\item\label{t2.35:s2} The poset \((D(X, d), {\leqslant})\) does not have the largest element.
\end{enumerate}
\end{theorem}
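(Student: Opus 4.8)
The plan is to prove the two implications separately. Both rest on Lemma~\ref{l5.29}, which guarantees that the scaling function of a weak similarity (or its inverse) is an order isomorphism between the relevant distance sets; the implication \(\ref{t2.35:s2} \Rightarrow \ref{t2.35:s1}\) in addition invokes the converse part of Theorem~\ref{t2.25}.

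For \(\ref{t2.35:s1} \Rightarrow \ref{t2.35:s2}\): assume \((X, d)\) is weakly similar to an unbounded ultrametric space \((Y, \rho)\). By Lemma~\ref{l5.29} the posets \((D(X, d), {\leqslant})\) and \((D(Y, \rho), {\leqslant})\) are order isomorphic, so, since the property of possessing a largest element is preserved by order isomorphisms, it suffices to check that \((D(Y, \rho), {\leqslant})\) has no largest element. And it does not: if \(M\) were the largest element of \(D(Y, \rho)\), then \(\diam(Y, \rho) = \sup D(Y, \rho) = M < \infty\), contradicting the unboundedness of \((Y, \rho)\). Hence \((D(X, d), {\leqslant})\) has no largest element either.

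For \(\ref{t2.35:s2} \Rightarrow \ref{t2.35:s1}\): if \((X, d)\) is unbounded, then it is weakly similar to itself (via the identity map, an isometry and hence a weak similarity by Example~\ref{ex5.29}), and we are done. So assume \((X, d)\) is bounded and put \(d^{*} = \diam(X, d)\). Since \((D(X, d), {\leqslant})\) has no largest element, \(X\) is not a one-point set (for which \(D(X, d) = \{0\}\)), whence \(d^{*} > 0\); boundedness gives \(d^{*} < \infty\); and \(d^{*} = \sup D(X, d)\) cannot belong to \(D(X, d)\), since otherwise it would be the largest element of \(D(X, d)\). Thus \(d^{*} \in (0, \infty)\) and \(d^{*} \notin D(X, d)\), so the hypotheses of the converse part of Theorem~\ref{t2.25} are met. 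It produces an unbounded ultrametric space \((X, \delta)\) with
\[
d(x, y) = \frac{d^{*} \cdot \delta(x, y)}{1 + \delta(x, y)}
\]
for all \(x\), \(y \in X\); that is, \(d = f \circ \delta\), where \(f\) is the strictly increasing ultrametric preserving function from Example~\ref{ex5.24}. Then Proposition~\ref{p5.32} yields \((X, f \circ \delta) \in WS(X, \delta)\), that is, \((X, d)\) is weakly similar to the unbounded ultrametric space \((X, \delta)\), as required.

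I do not expect a genuine obstacle: once Lemma~\ref{l5.29} and Theorem~\ref{t2.25} are in hand, the argument is essentially bookkeeping. The only points that need a little care are the equivalence, for a bounded distance set \(D\), between ``\((D, {\leqslant})\) has no largest element'' and ``\(\sup D \notin D\)'', and remembering to dispose of the degenerate one-point space before invoking Theorem~\ref{t2.25}, so that indeed \(d^{*} \in (0, \infty)\).
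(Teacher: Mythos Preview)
Your proof is correct and follows essentially the same route as the paper: both directions rely on Lemma~\ref{l5.29} for the order-isomorphism of distance sets, and the bounded case of \(\ref{t2.35:s2} \Rightarrow \ref{t2.35:s1}\) is handled via the converse part of Theorem~\ref{t2.25} together with Proposition~\ref{p5.32}. If anything, you are slightly more explicit than the paper in verifying that \(d^{*} \in (0,\infty)\) before invoking Theorem~\ref{t2.25} (and your \(d^{*} = \diam(X,d)\) is the correct constant; the paper's ``\(\diam(X,\rho)\)'' in that line is a slip).
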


\begin{proof}
\(\ref{t2.35:s1} \Rightarrow \ref{t2.35:s2}\). Let \((X, d)\) be a weakly similar to an unbounded ultrametric space \((Y, \rho)\). Then \(\diam (Y, \rho) \notin D(Y, \rho)\) holds. Hence, \(\diam (X, d) \notin D(X, d)\) by Lemma~\ref{l5.29}.

\(\ref{t2.35:s2} \Rightarrow \ref{t2.35:s1}\). Suppose \ref{t2.35:s2} holds. If \((X, d)\) is unbounded, then \(\ref{t2.35:s1}\) is valid because \((X, d)\) is weakly similar to itself. If \((X, d)\) is bounded, then, by Theorem~\ref{t2.25}, there is an unbounded ultrametric space \((X, \rho)\) such that
\[
d(x, y) = \diam (X, \rho) \cdot \frac{\rho(x, y)}{1 + \rho(x, y)}
\]
for all \(x\), \(y \in X\). The function \(f \colon \RR^{+} \to \RR^{+}\),
\[
f(t) = \diam (X, \rho) \cdot \frac{t}{1+t}
\]
is strictly increasing and ultrametric preserving. Hence, \((X, d)\) and \((X, \rho)\) are weakly similar by Proposition~\ref{p5.32}.
\end{proof}

Recall that a topological space \((X, \tau)\) is \emph{connected} if it is not the union of two disjoint nonempty open subsets of \(X\). A subset \(S\) of \(X\) is said to be connected if \(S\) is connected as subspace of \((X, \tau)\). A \emph{component} (= connected component) of \((X, \tau)\) is, by definition, a connected set \(A \subseteq X\) for which the implication
\[
(S \supseteq A) \Rightarrow (S = A)
\]
is valid for every connected \(S \subseteq X\). The set of all components of nonempty \((X, \tau)\) forms a partition of \(X\). Thus, for every \(x \in X\), there exists a unique component \(C(x)\) of \((X, \tau)\) such that \(x \in C(x)\). The set \(C(x)\) is called to be the \emph{component} of \(x\).

\begin{lemma}\label{l5.34}
A nonempty subset \(S\) of the real line \(\RR\) is connected if and only if \(S\) is an interval.
\end{lemma}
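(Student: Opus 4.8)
The plan is to prove both implications by contraposition, relying only on the order-completeness of \(\RR\) and the description of the subspace topology on \(S\).

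For the "only if" direction I would use the standard characterization: a set \(S\subseteq\RR\) is an interval if and only if it is order-convex, i.e.\ \(a,b\in S\) and \(a<c<b\) imply \(c\in S\). So assume \(S\) is not an interval; then there exist \(a,b\in S\) and \(c\in\RR\setminus S\) with \(a<c<b\). The sets \(U=S\cap(-\infty,c)\) and \(V=S\cap(c,+\infty)\) are open in the subspace \(S\), are nonempty (containing \(a\) and \(b\) respectively), are disjoint, and satisfy \(U\cup V=S\) precisely because \(c\notin S\). This exhibits a disconnection of \(S\), so \(S\) is not connected.

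For the "if" direction, suppose \(S\) is an interval but, seeking a contradiction, \(S=U\cup V\) with \(U,V\) nonempty, disjoint, and open in \(S\). Fix \(u\in U\), \(v\in V\), and assume without loss of generality \(u<v\); since \(S\) is an interval, \([u,v]\subseteq S\). Put \(c=\sup\bigl(U\cap[u,v]\bigr)\); this supremum exists by completeness of \(\RR\) and \(c\in[u,v]\subseteq S\), hence \(c\in U\) or \(c\in V\). If \(c\in U\), then \(c\neq v\) (as \(V\cap U=\emptyset\)), so \(c<v\); openness of \(U\) in \(S\) yields a point of \(S\) slightly to the right of \(c\) lying in \(U\cap[u,v]\), contradicting that \(c\) is an upper bound. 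If \(c\in V\), then \(c\neq u\), so \(c>u\); openness of \(V\) in \(S\) yields \(\varepsilon>0\) with \((c-\varepsilon,c]\cap S\subseteq V\), while the definition of \(c\) as a least upper bound forces a point of \(U\cap[u,v]\) in \((c-\varepsilon,c)\subseteq S\), a contradiction. Hence no such disconnection exists and \(S\) is connected.

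I do not anticipate a real obstacle: this is a classical fact, and the only point needing a little care is the case analysis in the supremum argument of the second direction, where one must separately accommodate \(c=u\), \(c=v\), and \(u<c<v\) when extracting the contradicting \(\varepsilon\)-neighbourhood, keeping in mind that the relevant neighbourhoods are taken in the subspace topology of \(S\) rather than in all of \(\RR\).
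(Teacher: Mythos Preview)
Your argument is correct and is the standard order-theoretic proof of this classical fact. The paper itself does not give a proof at all: it simply cites Bourbaki (Proposition~4 in \cite[p.~336]{Bourbaki1995}). So your self-contained treatment is not so much a different route as a fleshing-out of what the paper leaves to a reference; the only cosmetic point is that in the case \(c\in V\) you assert \((c-\varepsilon,c)\subseteq S\), which is not needed in that form---what you actually use (and what is immediate) is that the point \(p\in U\cap[u,v]\cap(c-\varepsilon,c)\) lies in \(S\) because \([u,v]\subseteq S\).
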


For the proof see Proposition~4 in \cite[p.~336]{Bourbaki1995}.

\begin{remark}\label{r5.35}
By bounded interval in \(\RR\) we mean any subset of \(\RR\) which can be given in the one of the following forms:
\begin{align*}
[a, b] &= \{x \in \RR \colon a \leqslant x \leqslant b\}, &  (a, b] &= \{x \in \RR \colon a < x \leqslant b\}, \\
[a, b) &= \{x \in \RR \colon a \leqslant x < b\}, & (a, b) &= \{x \in \RR \colon a < x < b\},
\end{align*}
where \(a\), \(b \in \RR\) with \(a \leqslant b\) for \([a, b]\) and \(a < b\) for \((a, b]\), \([a, b)\) and \((a, b)\). Similarly, by definition, any unbounded interval in \(\RR\) has one of the forms:
\begin{align*}
(-\infty, \infty) &= \RR, &  [a, \infty) &= \{x \in \RR \colon x \geqslant a\}, \\
(a, \infty) &= \{x \in \RR \colon x > a\}, & (-\infty, a] &= \{x \in \RR \colon x \leqslant a\}
\end{align*}
or \((-\infty, a) = \{x \in \RR \colon x < a\}\) with arbitrary \(a \in \RR\).
\end{remark}

\begin{remark}\label{r5.42}
In all topological spaces every single-point set and the empty set are connected (\cite[p.~108]{Bourbaki1995}). Moreover, if \((X, \tau)\) is a connected topological space, then the set \(X\) is the unique component of \((X, \tau)\). In particular, the empty set \(\varnothing\) is a component of \((X, \tau)\) if and only if \(X = \varnothing\).
\end{remark}

\begin{lemma}\label{l5.36}
Let \(A\) be a nonempty subset of \(\RR\). Then all components of \(A\) are intervals in \(\RR\).
\end{lemma}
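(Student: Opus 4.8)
The plan is to reduce the statement directly to Lemma~\ref{l5.34} by observing that every component of \(A\) is a nonempty connected subset of \(\RR\). First I would fix an arbitrary component \(C\) of \(A\) and note that \(C \neq \varnothing\): since \(A\) is nonempty, the set of all components of \(A\) forms a partition of \(A\) into nonempty pieces, so \(C\) cannot be empty (this also follows from Remark~\ref{r5.42}, according to which \(\varnothing\) is a component of a space only when the space itself is empty).

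Next I would argue that \(C\) is connected not merely as a subspace of \(A\) but as a subspace of \(\RR\). This is immediate from the transitivity of the subspace construction: since \(C \subseteq A \subseteq \RR\), the topology that \(C\) inherits from \(A\) coincides with the topology that \(C\) inherits from \(\RR\). Hence ``\(C\) is connected in \(A\)'' and ``\(C\) is connected in \(\RR\)'' are the same assertion, and the former holds because \(C\) is a component of \(A\). Having established that \(C\) is a nonempty connected subset of \(\RR\), I would invoke Lemma~\ref{l5.34} to conclude that \(C\) is an interval in \(\RR\). As \(C\) was an arbitrary component of \(A\), this finishes the proof.

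The argument presents no real obstacle; the only point worth spelling out explicitly is the transitivity of subspace topologies, which is what lets us pass from ``connected as a subspace of \(A\)'' to ``connected as a subspace of \(\RR\)'' so that Lemma~\ref{l5.34} applies.
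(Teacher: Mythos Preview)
Your proof is correct and follows essentially the same line as the paper's: both reduce to Lemma~\ref{l5.34} via the observation that a component of \(A\), being connected in \(A\), is connected in \(\RR\). The only difference is presentational---you invoke transitivity of the subspace topology directly, whereas the paper argues by contradiction and builds the separating open sets explicitly from a point \(z\) lying strictly between two points of the component but outside it.
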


\begin{proof}
Suppose contrary that there is a point \(p \in A\) for which the component \(C(p)\) of \(p\) in the topological space \(A\) is not an interval in \(\RR\). Then, by Lemma~\ref{l5.34}, the set \(C(p)\) is not a connected subset of \(\RR\). A subset \(E\) of \(\RR\) is connected if and only if \([x, y] \subseteq E\) holds whenever \(x\), \(y \in E\) and \(x < y\) (see, for example, Theorem~2.47 \cite{Rudin1976}). Consequently, there are \(x\), \(y \in C(p)\) and \(z \in \RR \setminus C(p)\) such that \(x < z < y\). Then the sets \((-\infty, z) \cap C(p)\) and \(C(p) \cap (z, \infty)\) are disjoint nonempty open subsets of the topological space \(C(p)\) and we have the equality
\[
C(p) = ((-\infty, z) \cap C(p)) \cup (C(p) \cap (z, \infty)).
\]
Hence, \(C(p)\) is not a connected subset of \(A\), that contradicts the definition of connected components.
\end{proof}

In what follows we will use the symbol \(\RR^{+} \setminus D(X)\) to denote the relative complement of the distance set \(D(X)\) of an ultrametric space \(X\) with respect to \(\RR^{+}\).

\begin{theorem}\label{t5.46}
Let \((X, d)\) be a nonempty ultrametric space and let \(D(X)\) be the distance set of \((X, d)\). Then the following conditions are equivalent.
\begin{enumerate}
\item \label{t5.46:s1} The space \((X, d)\) is bounded and \(\diam (X, d) \notin D(X)\).
\item \label{t5.46:s2} There is \(a \in (0, \infty)\) such that the set \([a, \infty)\) is a component of \(\RR^{+} \setminus D(X)\).
\item \label{t5.46:s3} There is \((Z, \delta) \in WS(X, d)\) such that if \(\Phi \colon Z \to X\) is a weak similarity and \(\psi \colon D(X) \to D(Z)\) is the scaling function of \(\Phi\), then 
\begin{equation}\label{t5.46:e1}
g|_{D(X)} \neq \psi
\end{equation}
holds for every pseudoultrametric preserving function \(g \colon \RR^{+} \to \RR^{+}\).
\end{enumerate}
\end{theorem}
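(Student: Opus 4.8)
The plan is to run a cycle through \ref{t5.46:s2}: I would prove \ref{t5.46:s1}$\Leftrightarrow$\ref{t5.46:s2}, then \ref{t5.46:s1}$\Rightarrow$\ref{t5.46:s3}, and finally \ref{t5.46:s3}$\Rightarrow$\ref{t5.46:s1} in contrapositive form. For \ref{t5.46:s1}$\Leftrightarrow$\ref{t5.46:s2}, write $M=\diam(X,d)=\sup D(X)$. If \ref{t5.46:s1} holds then $M\in(0,\infty)$ and $D(X)\subseteq[0,M)$, so $[M,\infty)\subseteq\RR^{+}\setminus D(X)$; since $M=\sup D(X)$, every left neighbourhood of $M$ meets $D(X)$, hence the component of $M$ in $\RR^{+}\setminus D(X)$ — an interval by Lemma~\ref{l5.36} — cannot descend below $M$ and therefore equals $[M,\infty)$, giving \ref{t5.46:s2} with $a=M$. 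Conversely, if $[a,\infty)$ with $a>0$ is a component of $\RR^{+}\setminus D(X)$, then $D(X)\subseteq[0,a)$ gives boundedness, and maximality of the component forces $\sup D(X)=a$ (otherwise $(b,\infty)$ with $b=\sup D(X)<a$ would be a strictly larger connected subset of $\RR^{+}\setminus D(X)$); since $a\notin D(X)$ this is exactly \ref{t5.46:s1}.

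For \ref{t5.46:s1}$\Rightarrow$\ref{t5.46:s3} I would note that \ref{t5.46:s1} says precisely that $(D(X,d),{\leqslant})$ has no largest element, so Theorem~\ref{t2.35} supplies an unbounded $(Z,\delta)\in WS(X,d)$, and this is the space witnessing \ref{t5.46:s3}. Indeed, the scaling function $\psi\colon D(X)\to D(Z)$ of any weak similarity $\Phi\colon Z\to X$ is a bijection onto the unbounded set $D(Z)$, so $\sup\psi(D(X))=\infty$; but $D(X)\subseteq[0,d^{*})$ with $d^{*}=\diam(X,d)<\infty$, hence any increasing $g\colon\RR^{+}\to\RR^{+}$ satisfies $g(t)\leqslant g(d^{*})<\infty$ for all $t\in D(X)$. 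Consequently no pseudoultrametric preserving $g$ can have $g|_{D(X)}=\psi$, which is \ref{t5.46:s3}.

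For the remaining implication I would prove the contrapositive. Assume \ref{t5.46:s1} fails, let $(Z,\delta)\in WS(X,d)$ be arbitrary, fix a weak similarity $\Phi\colon Z\to X$ (one exists because weak similarity is symmetric, Lemma~\ref{l5.29}) with scaling function $\psi$, and define $g\colon\RR^{+}\to\RR^{+}$ by $g(t)=\sup\{\psi(s)\colon s\in D(X),\ s\leqslant t\}$ (the set is nonempty since $0\in D(X)$, and $\psi(0)=0$ forces $g(0)=0$). This $g$ is increasing and agrees with $\psi$ on $D(X)$ as soon as it is real-valued, so everything reduces to checking finiteness. If $g(t_{0})=\infty$ for some $t_{0}$, then $\psi(s_{n})\to\infty$ for a strictly increasing sequence $(s_{n})$ in $D(X)$ with limit $s^{*}\leqslant t_{0}$, and $s^{*}\notin D(X)$ (otherwise $\psi(s^{*})$ would be a finite upper bound for the $\psi(s_{n})$). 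Then $D(X)=S\sqcup T$ with $S=\{s\in D(X)\colon s<s^{*}\}$, $T=\{s\in D(X)\colon s>s^{*}\}$, and since $\psi$ is an order \emph{isomorphism} of $(D(X),{\leqslant})$ onto $(D(Z),{\leqslant})$ (Lemma~\ref{l5.29}), $\psi(S)$ is an initial segment of $D(Z)$ with $\sup\psi(S)=\infty$; as every element of $\psi(T)$ dominates all of $\psi(S)$ and $D(Z)\subseteq\RR^{+}$, we must have $T=\varnothing$, i.e.\ $D(X)\subseteq[0,s^{*})$ with $\sup D(X)=s^{*}\notin D(X)$ — that is, \ref{t5.46:s1} holds, contradicting our assumption. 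Hence $g$ is everywhere finite, so it is pseudoultrametric preserving by Proposition~\ref{p5.45} and extends $\psi$, and \ref{t5.46:s3} fails.

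The step I expect to be the main obstacle is this last finiteness verification: one has to exploit that the scaling function is a genuine order isomorphism — not merely an increasing map — to rule out a blow-up of $\psi$ at a finite point outside $D(X)$, and to keep careful track of which of the two ways \ref{t5.46:s1} can fail (unboundedness, or an attained diameter) is being contradicted. Everything else is routine once Theorems~\ref{t2.35}, \ref{t2.25}, Lemma~\ref{l5.29}, Lemma~\ref{l5.36} and Proposition~\ref{p5.45} are in hand.
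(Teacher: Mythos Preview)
Your proposal is correct and follows essentially the same route as the paper: the equivalence \ref{t5.46:s1}$\Leftrightarrow$\ref{t5.46:s2} is immediate, \ref{t5.46:s1}$\Rightarrow$\ref{t5.46:s3} goes through Theorem~\ref{t2.35} to produce an unbounded $(Z,\delta)$ whose distance set cannot be the image of a bounded set under an increasing map, and the contrapositive of \ref{t5.46:s3}$\Rightarrow$\ref{t5.46:s1} uses the same sup-extension $g(t)=\sup\{\psi(s):s\in D(X),\ s\leqslant t\}$.

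The one place where you diverge slightly is in verifying that this $g$ is real-valued. The paper handles this by splitting on how \ref{t5.46:s1} fails --- if $(X,d)$ is unbounded then for each $t$ some $s'\in D(X)$ with $s'>t$ gives the finite bound $\psi(s')$, while if $\diam(X,d)\in D(X)$ then $\psi(\diam X)$ bounds everything --- though it leaves these observations implicit. Your argument instead assumes $g(t_0)=\infty$ and shows directly that this forces \ref{t5.46:s1}; this is a clean way to package both cases at once and makes the role of the order-isomorphism property of $\psi$ (Lemma~\ref{l5.29}) more visible. Either way the content is the same, and your version has the virtue of being fully explicit about the point you yourself flagged as the main obstacle.
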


\begin{proof}
The validity \(\ref{t5.46:s1} \Leftrightarrow \ref{t5.46:s2}\) follows directly from the definitions. 

Let us prove the validity of~\(\ref{t5.46:s1} \Leftrightarrow \ref{t5.46:s3}\). Let \(\ref{t5.46:s1}\) hold. Then, by Theorem~\ref{t2.35}, there is an unbounded \((Z, \delta) \in WS(X, d)\). Let \(\Phi \colon Z \to X\) be a weak similarity and let \(\psi \colon D(X) \to D(Z)\) be the scaling function of \(\Phi\). Since \((Z, \delta)\) is unbounded and \(\psi\) is surjective, the set \(\psi(D(X))\) is unbounded. Now if \(g \colon \RR^{+} \to \RR^{+}\) is an arbitrary pseudoultrametric preserving function, then we have the inequality
\[
g(t) \leqslant g(\diam X)
\]
for every \(t \in D(X)\), because \(g\) is increasing. Thus, 
\[
g(D(X)) \subseteq [0, g(\diam X)]
\]
holds. The last statement and unboundedness of \(\psi(D(X))\) imply \eqref{t5.46:e1}.

Let condition \(\ref{t5.46:s3}\) hold. If \(\ref{t5.46:s1}\) does not hold, then either \((X, d)\) is unbounded or \((X, d)\) is bounded and \(\diam (X, d) \in D(X)\).

Consider first the case when \((X, d)\) is unbounded. By condition~\(\ref{t5.46:s3}\), there is an ultrametric space \((Z, \delta) \in WS(X, d)\) such that if \(\Phi \colon Z \to X\) is a weak similarity with a scaling function \(\psi\), then \eqref{t5.46:e1} holds for every pseudoultrametric preserving \(g\). Let us define a function \(f \colon \RR^{+} \to \RR^{+}\) as
\begin{equation}\label{t5.46:e2}
f(t) = \sup \{\psi(s) \colon s \in [0, t] \cap D(X)\}, \quad t \in \RR^{+}.
\end{equation}
Then \(f\) is increasing and satisfies the equality \(f(0) = 0\). By Proposition~\ref{p5.45}, the function \(f\) is pseudoultrametric preserving. Moreover, from~\eqref{t5.46:e2} it follows the equality
\begin{equation}\label{t5.46:e3}
f|_{D(X)} = \psi.
\end{equation}
The last equality contradicts~\eqref{t5.46:e1} for \(g = f\).

Now let \((X, d)\) be bounded. If we have \(|X| = 1\), then condition~\(\ref{t5.46:s3}\) is false, contrary to supposition. If \(|X| \geqslant 2\) holds, then, as above, \(f\) defined by \eqref{t5.46:e2} is pseudoultrametric preserving and satisfies~\eqref{t5.46:e3}, which completes the proof of the theorem. 
\end{proof}

\begin{theorem}\label{t5.51}
Let \((X, d)\) be a nonempty ultrametric space and let \(D(X) = D(X, d)\) be the distance set of \((X, d)\). Then the following conditions are equivalent:
\begin{enumerate}
\item \label{t5.51:s1} There is \(a \in (0, \infty)\) such that \((0, a]\) or \([a, \infty)\) is a component of \(\RR^{+} \setminus D(X)\).
\item \label{t5.51:s2} There is an ultrametric space \((Z, \delta)\) such that \((Z, \delta)\) and \((X, d)\) are weakly similar, but if \(\Phi \colon Z \to X\) is a weak similarity and \(\psi \colon D(X) \to D(Z)\) is the scaling function of \(\Phi\), then 
\begin{equation}\label{t5.51:e1}
g|_{D(X)} \neq \psi
\end{equation}
holds for every ultrametric preserving function \(g \colon \RR^{+} \to \RR^{+}\).
\end{enumerate}
\end{theorem}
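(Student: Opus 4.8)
The plan is to prove $\ref{t5.51:s1} \Leftrightarrow \ref{t5.51:s2}$ by treating the two admissible shapes of the critical component of $\RR^{+} \setminus D(X)$ separately. If some $[a, \infty)$ with $a \in (0, \infty)$ is a component of $\RR^{+} \setminus D(X)$, then Theorem~\ref{t5.46} ($\ref{t5.46:s2} \Leftrightarrow \ref{t5.46:s3}$) already supplies $(Z, \delta) \in WS(X, d)$ whose scaling functions admit no \emph{pseudoultrametric} preserving extension agreeing with them on $D(X)$; since every ultrametric preserving function is pseudoultrametric preserving (Remark~\ref{r5.47}), the same $(Z, \delta)$ witnesses $\ref{t5.51:s2}$. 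Thus the genuinely new content concerns a component of the form $(0, a]$, which records exactly that $a_{0} := \inf(D(X) \setminus \{0\})$ is a positive number that is not attained (so $a_{0} = a \notin D(X)$ and $a_{0}$ is a limit point of $D(X)$ from above), in which case $|X| \ge 2$.

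For the implication ``$(0, a]$ is a component $\Rightarrow \ref{t5.51:s2}$'' I would construct $(Z, \delta)$ explicitly. Put $\psi(0) = 0$ and $\psi(s) = s - a$ for $s \in D(X) \setminus \{0\}$; since $(0, a] \cap D(X) = \varnothing$, every such $s$ exceeds $a$, so $\psi \colon D(X) \to \RR^{+}$ is well defined, strictly increasing, and vanishes only at $0$. By Lemma~\ref{l5.22} the function $\delta := \psi \circ d$ is an ultrametric on $X$, and $\Id \colon (X, \delta) \to (X, d)$ is a weak similarity with scaling function $\psi$ (viewed as a surjection onto $D(X, \delta)$), so $(Z, \delta) := (X, \delta) \in WS(X, d)$. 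Because $a$ is a limit point of $D(X)$ from above, $D(Z) \setminus \{0\} = \{s - a \colon s \in D(X) \setminus \{0\}\}$ has infimum $0$. Now suppose some weak similarity $\Phi \colon Z \to X$ with scaling function $\psi' \colon D(X) \to D(Z)$ admitted an ultrametric preserving $g$ with $g|_{D(X)} = \psi'$. Then $g$ vanishes only at $0$ and $\psi'$ is a bijection with $\psi'(0) = 0$, so $g(D(X) \setminus \{0\}) = D(Z) \setminus \{0\}$ has infimum $0$; choosing any $t \in (0, a)$ we have $t \notin D(X)$ and, $g$ being increasing, $g(t) \le g(s)$ for every $s \in D(X) \setminus \{0\}$ (each satisfies $s > a > t$), whence $g(t) \le \inf(D(Z) \setminus \{0\}) = 0$, contradicting $g(t) > 0$. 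Hence $\ref{t5.51:s2}$ holds.

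For the converse I would argue contrapositively: assume that for every $a \in (0, \infty)$ neither $(0, a]$ nor $[a, \infty)$ is a component of $\RR^{+} \setminus D(X)$. By Theorem~\ref{t5.46} ($\ref{t5.46:s1} \Leftrightarrow \ref{t5.46:s2}$), $(X, d)$ is either unbounded or bounded with $M := \diam(X, d) \in D(X)$; and, unless $|X| = 1$ (in which case $\ref{t5.51:s2}$ fails trivially via $g(t) = t$), the number $a_{0} := \inf(D(X) \setminus \{0\})$ is either $0$ or a member of $D(X)$ — for if $a_{0} > 0$ and $a_{0} \notin D(X)$ then $(0, a_{0}]$ would be a component of $\RR^{+} \setminus D(X)$, contrary to assumption. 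Given any $(Z, \delta) \in WS(X, d)$, fix a weak similarity $\Phi \colon Z \to X$ (possible by Lemma~\ref{l5.29}) with scaling function $\psi$, and set $f(t) = \sup\{\psi(s) \colon s \in [0, t] \cap D(X)\}$ for $t \in \RR^{+}$. The step I expect to be the main obstacle is verifying that $f$ is finite valued: if $(X, d)$ is unbounded, choose $u \in D(X)$ with $u > t$ and bound the supremum by $\psi(u)$; if $(X, d)$ is bounded, bound it by $\psi(M)$ — so $f$ can fail to be finite only in the excluded case of a bounded space with $\diam \notin D(X)$. Once $f$ is finite it is increasing with $f(0) = 0$, hence pseudoultrametric preserving by Proposition~\ref{p5.45}, and $f|_{D(X)} = \psi$. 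If $a_{0} = 0$, then $(0, t] \cap D(X) \ne \varnothing$ for every $t > 0$, so $f(t) > 0$ there and $f$ is ultrametric preserving by Theorem~\ref{t5.21}. If $a_{0} > 0$ (so $a_{0} \in D(X)$ and $D(X) \cap (0, a_{0}) = \varnothing$), I would modify $f$ on $[0, a_{0}]$ by the strictly increasing linear piece $t \mapsto (t / a_{0}) \psi(a_{0})$, which agrees with $f$ at $a_{0}$, is positive on $(0, a_{0}]$, and leaves $f|_{D(X)}$ unchanged; the resulting $g$ is increasing and vanishes only at $0$, hence is ultrametric preserving and still extends $\psi$. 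In every case $\ref{t5.51:s2}$ fails, which completes the argument.
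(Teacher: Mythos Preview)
Your proof is correct and follows essentially the same route as the paper's: reduce the $[a,\infty)$-component case to Theorem~\ref{t5.46}, handle the $(0,a]$-component case by the shifted ultrametric $\delta = (d-a)^{+}$, and for the converse use the supremum extension $f(t) = \sup\{\psi(s) : s \in [0,t]\cap D(X)\}$.

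Two points where you are in fact more careful than the paper. First, in the $(0,a]$ case the paper verifies non-extendability only for the specific scaling function $\psi$ of $\Id$, whereas condition~\ref{t5.51:s2} quantifies over all weak similarities $\Phi \colon Z \to X$; your argument via $\inf(D(Z)\setminus\{0\}) = 0$ and $g(t) \le g(s)$ for every $s \in D(X)\setminus\{0\}$ covers every scaling function $\psi'$ at once. Second, in the converse the paper asserts that if \ref{t5.51:s1} fails then $0$ is an accumulation point of $D(X)$; this overlooks the possibility that $a_{0} := \inf(D(X)\setminus\{0\})$ is positive and attained (e.g.\ $D(X) = \{0,1,2\}$), in which case the supremum extension vanishes on $(0,a_{0})$ and is not ultrametric preserving. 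Your linear patch $t \mapsto (t/a_{0})\psi(a_{0})$ on $[0,a_{0}]$ is exactly what is needed to close this gap.
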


\begin{proof}
The theorem is evidently valid for the case \(|X| = 1\). So in what follows we will suppose that \(|X| \geqslant 2\).

\(\ref{t5.51:s1} \Rightarrow \ref{t5.51:s2}\). Let condition~\ref{t5.51:s1} hold. If there is \(a > 0\) such that \([a, \infty)\) is a component of \(\RR^{+} \setminus D(X)\), then \ref{t5.51:s2} takes place by Theorem~\ref{t5.46}.

If \((0, a]\) is a component of \(\RR^{+} \setminus D(X)\) for some \(a > 0\), then we define a function \(\psi\) on \(D(X) = D(X, d)\) as
\begin{equation}\label{t5.51:e2}
\psi(t) = \begin{cases}
0 & \text{if } t = 0,\\
t-a & \text{if } t \in D(X) \cap [a, \infty).
\end{cases}
\end{equation}
By Lemma~\ref{l5.22}, the mapping \(\rho \colon X \times X \to \RR^{+}\) with
\begin{equation}\label{t5.51:e3}
\rho(x, y) = \psi (d(x, y))
\end{equation}
is an ultrametric on \(X\). From \eqref{t5.51:e2} and \eqref{t5.51:e3} it follows that the diagram
\[
\ctdiagram{
\ctv 0,50:{X \times X}
\ctv 100,50:{X \times X}
\ctv 0,0:{D(X, \rho)}
\ctv 100,0:{D(X, d)}
\ctet 0,50,100,50:{\operatorname{Id} \otimes \operatorname{Id}}
\ctel 0,50,0,0:{\rho}
\cter 100,50,100,0:{d}
\ctet 100,0,0,0:{\psi}
}
\]
is commutative and \(\psi \colon D(X, d) \to D(X, \rho)\) is strictly increasing and bijective. Hence, \(\operatorname{Id} \colon X \to X\) is a weak similarity of the ultrametric spaces \((X, \rho)\), \((X, d)\) and \(\psi\) is the scaling function of this weak similarity. Suppose that there is an ultrametric preserving function \(g \colon \RR^{+} \to \RR^{+}\) such that 
\begin{equation}\label{t5.51:e4}
g|_{D(X, d)} = \psi.
\end{equation}
Since \((0, a]\) is a component of \(\RR^{+} \setminus D(X)\), there is a strictly decreasing sequence \((t_n)_{n \in \mathbb{N}} \subseteq D(X, d)\) for which
\begin{equation}\label{t5.51:e5}
\lim_{n \to \infty} t_n = a
\end{equation}
holds. Now using \eqref{t5.51:e2}, \eqref{t5.51:e4} and \eqref{t5.51:e5} we obtain the contradiction
\[
0 < g(a) \leqslant \lim_{n \to \infty} g(t_n) = \lim_{n \to \infty} (t_n - a) = 0.
\]
So, equality~\eqref{t5.51:e4} is not possible for any ultrametric preserving function \(g\).

\(\ref{t5.51:s2} \Rightarrow \ref{t5.51:s1}\). Let \ref{t5.51:s2} be valid. Suppose condition \ref{t5.51:s1} does not hold. Then the point \(0\) is an accumulation point of \(D(X)\) and either \(\diam X \in D(X)\) or \((X, d)\) is unbounded. By condition~\(\ref{t5.51:s2}\), there is an ultrametric space \((Z, \delta)\) such that \((X, d)\) and \((Z, \delta)\) are weakly similar, and if \(\Phi \colon Z \to X\) is a weak similarity with a scaling function \(\psi\), then \eqref{t5.51:e1} holds for every ultrametric preserving \(g\). 

Let us define \(g \colon \RR^{+} \to \RR^{+}\) by equality~\eqref{t5.46:e2} with \(f = g\). Then, as in the proof of Theorem~\ref{t5.46}, we obtain that \(g\) is pseudoultrametric preserving and the equality 
\begin{equation}\label{t5.51:e6}
g|_{D(X, d)} = \psi
\end{equation}
holds. To complete the proof it suffices to note that \(g\) is ultrametric preserving. Indeed, the inequality \(g(t) > 0\) holds for every \(t \in (0, \infty)\) because \(g\) is increasing, \(\psi(t) > 0\) takes place for every \(t \in D(X) \setminus \{0\}\) and the point \(0\) is an accumulation point of \(D(X)\). Hence, \(g\) is ultrametric preserving by Remark~\ref{r5.47}.
\end{proof}

Recall that an ultrametric space \((X, d)\) is \emph{spherically complete} if every sequence \((B_n)_{n\in \mathbb{N}}\) of closed balls 
\[
B_n = \{x \in X \colon d(x_n, x) \leqslant r_n\}, \quad r_n > 0, \quad x_n \in X,
\]
with \(B_1 \supseteq B_2 \supseteq \ldots\) has a nonempty intersection (see, for example, Definition~20.1 \cite{Sch1985}).

\begin{example}\label{ex5.52}
Let \(X\) be a nonempty subset of an interval \((a, b)\), \(0 < a < b < \infty\). Following~\cite{DLPS2008TaiA}, we define an ultrametric \(d \colon X \times X \to \RR^{+}\) as
\[
d(x, y) = \begin{cases}
0 & \text{if } x = y,\\
\max\{x, y\} & \text{if } x \neq y.
\end{cases}
\]
Then \((X, d)\) is spherically complete and has a nonempty diametrical graph if and only if, for every \((Z, \delta) \in WS(X, d)\) and every weak similarity \(\Phi \colon Z \to X\), there is an ultrametric preserving function \(g \colon \RR^{+} \to \RR^{+}\) such that \(g|_{D(X)} = \psi\), where \(\psi\) is the scaling function of \(\Phi\).
\end{example}

\begin{theorem}\label{t5.34}
Let \((X, d)\) be a nonempty ultrametric space and let \(D(X) = D(X, d)\) be the distance set of \((X, d)\). Then the following conditions are equivalent:
\begin{enumerate}
\item \label{t5.34:s1} Every component of \(\RR^{+} \setminus D(X, d)\) is either open in \(\RR\) or it is a single-point set.
\item \label{t5.34:s2} For every \((Z, \delta) \in WS(X, d)\) and every weak similarity \(\Phi \colon Z \to X\) there is a strictly increasing ultrametric preserving function \(g \colon \RR^{+} \to \RR^{+}\) such that
\begin{equation}\label{t5.34:e0}
g|_{D(X, d)} = \psi,
\end{equation}
where \(\psi\) is the scaling function of the weak similarity \(\Phi\).
\end{enumerate}
\end{theorem}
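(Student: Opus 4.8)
The plan is to prove both implications. Two facts will be used repeatedly: the components of \(\RR^{+} \setminus D(X)\) are intervals (Lemma~\ref{l5.36}), and the scaling function \(\psi\) of any weak similarity \(\Phi \colon Z \to X\) is an order isomorphism of \((D(X), {\leqslant})\) onto \((D(Z), {\leqslant})\) (Lemma~\ref{l5.29}); recall also that \(0 \in D(X)\) always. For \(\ref{t5.34:s1} \Rightarrow \ref{t5.34:s2}\), assume \ref{t5.34:s1}; the case \(|X| = 1\) is trivial (take \(g(t) = t\)), so let \(|X| \geqslant 2\). Combining the maximality of components with a short look at the possible shapes of an interval inside \((0, \infty)\) shows that under \ref{t5.34:s1} every component of \(\RR^{+} \setminus D(X)\) is either a bounded open interval \((\alpha, \beta)\) with \(0 \leqslant \alpha < \beta < \infty\) and \(\alpha, \beta \in D(X)\), or an interval \((\alpha, \infty)\) with \(\alpha = \diam X \in D(X)\), or a singleton \(\{\alpha\}\) with \(\alpha \notin D(X)\) a two-sided accumulation point of \(D(X)\). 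Given \((Z, \delta) \in WS(X, d)\), a weak similarity \(\Phi \colon Z \to X\) and its scaling function \(\psi\), I would define \(g \colon \RR^{+} \to \RR^{+}\) by \(g|_{D(X)} = \psi\); on a bounded open component \((\alpha, \beta)\) let \(g\) be the increasing affine bijection of \([\alpha, \beta]\) onto \([\psi(\alpha), \psi(\beta)]\); on a component \((\alpha, \infty)\) let \(g(t) = \psi(\alpha) + (t - \alpha)\); and on a singleton component \(\{\alpha\}\) let \(g(\alpha) = \sup\{\psi(s) \colon s \in D(X),\ s < \alpha\}\). Then \(g|_{D(X)} = \psi\), \(g(0) = 0\), and \(g(t) > 0\) for all \(t > 0\); it remains to check that \(g\) is \emph{strictly} increasing, which reduces to a case analysis on where two points \(t_{1} < t_{2}\) fall (in \(D(X)\), in an open component, or at a singleton component), using that the endpoints of a bounded open component lie in \(D(X)\) and that \(\psi(s) < g(\alpha) < \psi(s')\) whenever \(\{\alpha\}\) is a component and \(s, s' \in D(X)\) with \(s < \alpha < s'\). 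Once this is done, Theorem~\ref{t5.21} gives that \(g\) is ultrametric preserving.

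For \(\ref{t5.34:s2} \Rightarrow \ref{t5.34:s1}\) I argue by contraposition: suppose some component \(I\) of \(\RR^{+} \setminus D(X)\) is neither open in \(\RR\) nor a singleton. If \(I = (0, a]\) or \(I = [a, \infty)\) for some \(a > 0\), then Theorem~\ref{t5.51} provides \((Z, \delta) \in WS(X, d)\) for which no ultrametric preserving function extends the scaling function of any weak similarity from \(Z\) to \(X\); a fortiori no strictly increasing ultrametric preserving function does, so \ref{t5.34:s2} fails. Otherwise \(I\) is a bounded interval with \(\inf I = a > 0\), \(\sup I = b < \infty\), \(a < b\), containing at least one endpoint, so \(I \in \{[a, b], [a, b), (a, b]\}\). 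Here \(D(X) \cap (a, b) = \varnothing\), and \(a \in D(X) \Leftrightarrow a \notin I\), \(b \in D(X) \Leftrightarrow b \notin I\); moreover, by maximality of \(I\), \(a\) is a left accumulation point of \(D(X)\) whenever \(a \in I\), and \(b\) is a right accumulation point whenever \(b \in I\). I would then ``collapse \(I\)'' by choosing the strictly increasing \(\psi \colon D(X) \to \RR^{+}\), \(\psi(0) = 0\), defined by \(\psi(t) = t/a\) on \(D(X) \cap [0, a]\), \(\psi(t) = 1 + (t - b)/(1 + t - b)\) on \(D(X) \cap (b, \infty)\), and \(\psi(b) = 1\) if \(b \in D(X)\); since \(D(X) \cap (a, b) = \varnothing\), these pieces glue into a strictly increasing function. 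By Lemma~\ref{l5.22}, \(\delta := \psi \circ d\) is an ultrametric on \(Z := X\), and \(\Id \colon Z \to X\) is a weak similarity with scaling function \(\psi\), so \((Z, \delta) \in WS(X, d)\). If \(g \colon \RR^{+} \to \RR^{+}\) is increasing with \(g|_{D(X)} = \psi\), then, using the accumulation properties, \(g(a) \geqslant 1\) (it equals \(\psi(a) = 1\) when \(a \in D(X)\), and is \(\geqslant \sup\{\psi(s) \colon s \in D(X),\ s < a\} = 1\) otherwise) and \(g(b) \leqslant 1\) (symmetrically); hence \(g(a) \geqslant g(b)\) while \(a < b\), so \(g\) cannot be strictly increasing. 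Thus no strictly increasing \(g\) extends \(\psi\), and \ref{t5.34:s2} fails.

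The step I expect to be the main obstacle is the construction of \(\psi\) in the contrapositive direction, which must be strictly increasing, fix \(0\), and force the supremum of \(\psi\)-values just below \(I\) to coincide with the infimum just above \(I\) (both \(= 1\) in the formula above). One must verify the gluing of the explicit formula in each of the cases \(I = [a, b]\), \([a, b)\), \((a, b]\) — the prescription \(\psi(b) = 1\) being used exactly when \(b \in D(X)\), and a piece being vacuous when the corresponding slice of \(D(X)\) is empty — and that the relevant one-sided limits are indeed \(1\) precisely because \(a\) and/or \(b\) are genuine one-sided accumulation points of \(D(X)\), which is where the maximality of \(I\) enters. By comparison, the strict-monotonicity verification of \(g\) and the preliminary classification of components in the first implication are routine.
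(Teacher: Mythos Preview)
Your proposal is correct and follows essentially the same strategy as the paper's proof: for \(\ref{t5.34:s1}\Rightarrow\ref{t5.34:s2}\) you extend \(\psi\) piecewise (affine on open components, a chosen value at singleton components), and for \(\ref{t5.34:s2}\Rightarrow\ref{t5.34:s1}\) you argue by contraposition, invoking Theorem~\ref{t5.51} for the unbounded-type components and constructing an explicit ``collapsing'' scaling function for the bounded half-open or closed components. The differences are cosmetic: on an unbounded component \((\alpha,\infty)\) the paper uses \(g(t)=\tfrac{\psi(\alpha)}{\alpha}\,t\) where you use \(g(t)=\psi(\alpha)+(t-\alpha)\); at a singleton component the paper allows any value in \([\sup,\inf]\) while you take the \(\sup\); and in the contrapositive the paper uses \(\psi(t)=t\) on \([0,a)\cap D(X)\) and \(\psi(t)=t-(b-a)\) on \([b,\infty)\cap D(X)\), whereas you normalise so that both one-sided limits equal \(1\). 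One small advantage of your write-up is that your single formula for \(\psi\) covers all three cases \([a,b]\), \([a,b)\), \((a,b]\) uniformly, including the closed case \([a,b]\) which the paper's case split \((s_1)\)/\((s_2)\) does not list explicitly (though the paper's argument for \([a,b)\) adapts to it without change).
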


\begin{proof}
This theorem is obviously true for \(|X| = 1\). Furthermore, if the equality \(D(X) = \RR^{+}\) holds, then we have \(\RR^{+} \setminus D(X) = \varnothing\). Hence, in this case condition~\ref{t5.34:s1} is valid by Remark~\ref{r5.42}. Moreover, for the case \(D(X) = \RR^{+}\), the domain of the scaling function \(\psi\) of any weak similarity \(\Phi \colon Z \to X\) coincides with \(\RR^{+}\). By Definition~\ref{d2.34}, the function \(\psi\) is strictly increasing and satisfies \(\psi(0) = 0\). Hence, \(\psi\) is strictly increasing and ultrametric preserving by Theorem~\ref{t5.21}. So, in what follows, without loss of generality, we can assume that \(|X| \geqslant 2\) and \(\RR^{+} \setminus D(X) \neq \varnothing\).

\(\ref{t5.34:s1} \Rightarrow \ref{t5.34:s2}\). Suppose that condition~\ref{t5.34:s1} holds. Let \((Z, \delta) \in WS(X, d)\) and let \(\Phi \colon Z \to X\) be a weak similarity with a scaling function \(\psi \colon D(X) \to D(Z)\),
\[
\ctdiagram{
\ctv 0,50:{Z \times Z}
\ctv 100,50:{X \times X}
\ctv 0,0:{D(Z, \delta)}
\ctv 100,0:{D(X, d)}
\ctet 0,50,100,50:{\Phi \otimes \Phi}
\ctel 0,50,0,0:{\delta}
\cter 100,50,100,0:{d}
\ctet 100,0,0,0:{\psi}
}.
\]
We want to continue the scaling function \(\psi \colon D(X) \to D(Z)\) to a strictly increasing ultrametric preserving function \(g \colon \RR^{+} \to \RR^{+}\). 

Let \(t \in \RR^{+}\). 

Consider first the case when \(t \in D(X)\) or \(\{t\}\) is a component of \(\RR^{+} \setminus D(X)\). Then we have
\[
D(X) \cap [0, t] \neq \varnothing \neq D(X) \cap [t, \infty).
\]
Consequently, there is a point \(t^{*} \in \RR^{+}\) such that
\begin{equation}\label{t5.34:e2}
\sup\{\psi(s) \colon s \in D(X) \cap [0, t]\} \leqslant t^{*} \leqslant \inf\{\psi(s) \colon s \in D(X) \cap [t, \infty)\}.
\end{equation}
Let us define \(g(t)\) as
\begin{equation}\label{t5.34:e5}
g(t) = t^{*}.
\end{equation}

If there is an interval \((a, b) \ni t\) such that \((a, b)\) is a component of \(\RR^{+} \setminus D(X)\), then the points \(a\) and \(b\) belong to \(D(X)\) and we can define \(g(t)\) as
\begin{equation}\label{t5.34:e3}
g(t) = \psi(a) + \frac{\psi(b) - \psi(a)}{b - a} (t - a).
\end{equation}

Similarly, if \(t \in (a, \infty)\), where \((a, \infty)\) is a component of \(\RR^{+} \setminus D(X)\), then \(a \in D(X)\). From \(|X| \geqslant 2\) and Lemma~\ref{l5.29} it follows that the inequalities \(a > 0\) and \(\psi(a)  > 0\) hold. In this case we write
\begin{equation}\label{t5.34:e4}
g(t) = \frac{\psi(a)}{a} t.
\end{equation}

Since every point of \(\RR^{+} \setminus D(X)\) belongs to the component of this point, condition~\ref{t5.34:s1} implies that \(g(t)\) is defined now for all \(t \in \RR^{+} \setminus D(X)\). It should be noted here that the components of two distinct points of any topological space either coincide or are disjoint, so that this definition is correct. 

If \(t \in D(X)\), then we have
\[
\psi(t) = \sup\{\psi(s) \colon s \in D(X) \cap [0, t]\}
\]
and
\[
\psi(t) = \inf\{\psi(s) \colon s \in D(X) \cap [t, \infty)\}
\]
because \(\psi\) is increasing. Consequently, the equality \(\psi(t) = g(t)\) follows from~\eqref{t5.34:e2} and \eqref{t5.34:e5} for every \(t \in D(X)\), that implies~\eqref{t5.34:e0} and the equality \(g(0) = 0\). 

Let us prove the validity of the implication
\begin{equation}\label{t5.34:e9}
(t_1 < t_2) \Rightarrow (g(t_1) < g(t_2))
\end{equation}
for all \(t_1\), \(t_2 \in \RR^{+}\). 

First of all, it should be noted that \eqref{t5.34:e0} implies \eqref{t5.34:e9} for all \(t_1\), \(t_2 \in D(X)\) since the scaling function \(\psi\) is strictly increasing. Let \(t_1 \in \RR^{+} \setminus D(X)\), \(t_2 \in D(X)\) and \(t_1 < t_2\). If \(\{t_1\}\) is a component of \(\RR^{+} \setminus D(X)\), then there is a point \(t_3 \in D(X) \cap [t_1, \infty)\) such that \(\psi(t_3) < \psi(t_2)\). From \eqref{t5.34:e2} and \eqref{t5.34:e5} it follows 
\[
g(t_1) \leqslant g(t_3) = \psi(t_3),
\]
which together with \(\psi(t_3) < \psi(t_2) = g(t_2)\) implies \(g(t_1) < g(t_2)\). The case when \(t_1 \in D(X)\), \(t_2 \in \RR^{+} \setminus D(X)\) and \(t_1 < t_2\) can be considered similarly.

Let \(t_1\) and \(t_2\) be points of \(\RR^{+} \setminus D(X)\) such that \(t_1 < t_2\). If the components of these points are the same, then \eqref{t5.34:e9} follows from \eqref{t5.34:e3} and \eqref{t5.34:e4}. Suppose now that the points \(t_1\) and \(t_2\) belong to the different components of \(\RR^{+} \setminus D(X)\). Then there is a point \(t_3 \in D(X)\) such that
\[
t_1 < t_3 < t_2.
\]
It was shown above that in this case we have
\[
g(t_1) < g(t_3) < g(t_2).
\]
Thus, \eqref{t5.34:e9} is valid for all \(t_1\), \(t_2 \in \RR^{+}\).

For every strictly increasing function \(f \colon \RR^{+} \to \RR^{+}\) satisfying \(f(0) = 0\), we have \(f^{-1}(0) = \{0\}\). Thus, the function \(g\) defined above is strictly increasing and ultrametric preserving by Pongsriiam---Termwuttipong theorem.

\(\ref{t5.34:s2} \Rightarrow \ref{t5.34:s1}\). Suppose condition \ref{t5.34:s1} is false. We must show that \ref{t5.34:s2} is also false. 

The classification of intervals of \(\RR\) given in Remark~\ref{r5.35} and Lemma~\ref{l5.36} imply that at least one from the following statements are valid:
\begin{enumerate}
\item [\((s_1)\)] There is \(a \in (0, \infty)\) such that \((0, a]\) or \([a, \infty)\) is a component of \(\RR^{+} \setminus D(X)\).
\item [\((s_2)\)] There are some points \(a\), \(b\) such that \(0 < a < b < \infty\) and \([a, b)\) or \((a, b]\) is a component of \(\RR^{+} \setminus D(X)\).
\end{enumerate}

If \((s_1)\) holds, then \ref{t5.34:s2} is false by Theorem~\ref{t5.51}.

Let us consider the case, when \((s_2)\) holds and \([a, b)\) is a component of the set \(\RR^{+} \setminus D(X)\). Let us consider the function \(\psi\) defined as
\begin{equation}\label{t5.34:e6}
\psi(t) = \begin{cases}
t &\text{if } t \in D(X) \cap [0, a),\\
t - (b-a) &\text{if } t \in D(X) \cap [b, \infty).
\end{cases}
\end{equation}
From \([a, b) \cap D(X) = \varnothing\) it follows that \(\psi\) is defined for all \(t \in D(X)\). It is clear that \(\psi\) is strictly increasing on \(D(X)\) and \(\psi(t) = 0\) holds if and only if \(t = 0\). By Lemma~\ref{l5.22}, the mapping 
\[
X \times X \ni \<x, y> \mapsto \psi(d(x, y))
\]
is an ultrametric on \(X\). 

Write \(\delta = \psi \circ d\). Then the ultrametric spaces \((X, d)\) and \((X, \delta)\) are weakly similar, the identical mapping \(\operatorname{Id} \colon X \to X\) is a weak similarity of \((X, \delta)\) and \((X, d)\), and the function \(\psi\),
\[
D(X, d) \ni t \mapsto \psi(t) \in D(X, \delta),
\]
is the scaling function of this weak similarity.

We claim that there is no a strictly increasing ultrametric preserving function \(g \colon \RR^{+} \to \RR^{+}\) for which the equality
\begin{equation}\label{t5.34:e10}
g|_{D(X, d)} = \psi
\end{equation}
holds. To prove it, we may consider a point \(b_1 \in [a, b)\) such that
\begin{equation}\label{t5.34:e7}
a < b_1 < b.
\end{equation}
If \(g \colon \RR^{+} \to \RR^{+}\) is an arbitrary strictly increasing ultrametric preserving function, then \eqref{t5.34:e7} implies
\begin{equation}\label{t5.34:e8}
g(a) < g(b_1) < g(b).
\end{equation}
Since \([a, b)\) is a component of \(\RR^{+} \setminus D(X, d)\), the membership \(b \in D(X, d)\) and the equality \(a > 0\) hold. Hence, there is a strictly increasing sequence \((a_n)_{n \in \mathbb{N}}\) of points of the set \(D(X, d) \cap [0, a)\) such that
\[
\lim_{n \to \infty} a_n = a.
\]
The last equality, \eqref{t5.34:e10} and \eqref{t5.34:e6} imply that
\[
\lim_{n \to \infty} g(a_n) = \lim_{n \to \infty} \psi(a_n) = a.
\]
Moreover, \eqref{t5.34:e10}, \eqref{t5.34:e6} and \(b \in D(X, d)\) imply that \(\psi(b) = g(b) = b - (b-a) = a\). Now using \eqref{t5.34:e8} we have the contradiction
\[
a = \lim_{n \to \infty} g(a_n) \leqslant g(a) < g(b_1) < g(b) = a.
\]

If \((s_2)\) holds for a component \((a, b]\) of \(\RR^{+} \setminus D(X, d)\), then using the function
\[
\varphi(t) = \begin{cases}
t & \text{if } t \in D(X) \cap [0, a],\\
t - (b-a) & \text{if } t \in D(X) \cap (b, \infty)
\end{cases}
\]
instead of the function \(\psi\) defined by \eqref{t5.34:e6} and arguing as above we can see that the equality
\[
g|_{D(X, d)} = \varphi
\]
is impossible for any strictly increasing ultrametric preserving function \(g\).
\end{proof}

\begin{corollary}\label{c5.38}
Let \(X\) and \(Z\) be nonempty weakly similar ultrametric spaces and let the distance set \(D(X)\) be a closed subset of \(\RR^{+}\). Then, for every weak similarity \(\Phi \colon Z \to X\) of \(Z\) and \(X\), the scaling function of \(\Phi\) admits a continuation to a strictly increasing ultrametric preserving function.
\end{corollary}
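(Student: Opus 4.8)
The plan is to derive this corollary directly from the equivalence \(\ref{t5.34:s1} \Leftrightarrow \ref{t5.34:s2}\) in Theorem~\ref{t5.34}. Since \(X\) and \(Z\) are weakly similar ultrametric spaces, we have \(Z \in WS(X, d)\), so it suffices to verify that \(D(X)\) being closed in \(\RR^{+}\) forces condition~\ref{t5.34:s1}: every component of \(\RR^{+} \setminus D(X)\) is open in \(\RR\) (or a single-point set). Then the implication \(\ref{t5.34:s1} \Rightarrow \ref{t5.34:s2}\) yields, for every weak similarity \(\Phi \colon Z \to X\), a strictly increasing ultrametric preserving continuation \(g\) of the scaling function \(\psi\) of \(\Phi\).

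First I would observe that \(\RR^{+} \setminus D(X)\) is in fact an open subset of \(\RR\). Indeed, since \(X\) is nonempty and \(d(x, x) = 0\), we have \(0 \in D(X)\), hence \(\RR^{+} \setminus D(X) \subseteq (0, \infty)\) and \(\RR^{+} \setminus D(X) = (0, \infty) \setminus D(X) = (0, \infty) \cap (\RR \setminus D(X))\). The set \(\RR^{+} = [0, \infty)\) is closed in \(\RR\), so a subset of \(\RR^{+}\) that is closed in \(\RR^{+}\) is closed in \(\RR\); applying this to \(D(X)\) shows that \(\RR \setminus D(X)\) is open in \(\RR\), and its intersection with the open set \((0, \infty)\) is open in \(\RR\).

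Next, I would invoke the standard fact that a component \(C\) of an open set \(U \subseteq \RR\) is an open interval: for \(p \in C\) pick \(\varepsilon > 0\) with \((p - \varepsilon, p + \varepsilon) \subseteq U\); this interval is connected and contains \(p\), hence lies in \(C\), so \(C\) is open, and by Lemma~\ref{l5.34} it is an interval. (Lemma~\ref{l5.36} already guarantees that the components are intervals; the new point here is openness.) Consequently every component of \(\RR^{+} \setminus D(X)\) is open in \(\RR\), i.e.\ condition~\ref{t5.34:s1} of Theorem~\ref{t5.34} holds, and the corollary follows by that theorem.

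I expect there to be no real obstacle; the only delicate point is the interplay between ``closed in \(\RR^{+}\)'' and ``closed in \(\RR\)'', which is harmless because \(0 \in D(X)\) pushes the complement into \((0, \infty)\), and the trivial cases \(|X| = 1\) and \(D(X) = \RR^{+}\) are already absorbed by the preliminary reductions made in the proof of Theorem~\ref{t5.34}.
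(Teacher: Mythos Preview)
Your proposal is correct and follows essentially the same route as the paper: both derive the corollary from Theorem~\ref{t5.34} by observing that \(\RR^{+}\setminus D(X)\) is open in \(\RR\) and that every component of an open subset of \(\RR\) is an open interval. You simply spell out in more detail why closedness of \(D(X)\) in \(\RR^{+}\) yields openness of the complement in \(\RR\), a point the paper leaves implicit.
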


\begin{proof}
It follows from Theorem~\ref{t5.34} because the set \(\RR^{+} \setminus D(X)\) is an open subset of \(\RR\) and every component of any nonempty open subset of \(\RR\) is an open interval.
\end{proof}

Recall that a metric space \((X, d)\) is \emph{totally bounded} if for every \(\varepsilon > 0\) there is a finite set \(\{x_1, \ldots, x_n\} \subseteq X\) such that 
\[
\min_{1 \leqslant i \leqslant n} d(x_i, x) < \varepsilon
\]
for every \(x \in X\).

The following lemma gives us a characterization of distance sets of totally bounded ultrametric spaces.

\begin{lemma}\label{c6.9}
The following statements are equivalent for every \(A \subseteq \RR^{+}\):
\begin{enumerate}
\item \label{c6.9:s1} There is an infinite compact ultrametric space \(X\) such that \(A = D(X)\).
\item \label{c6.9:s2} There is an infinite totally bounded ultrametric space \(X\) such that \(A = D(X)\).
\item \label{c6.9:s3} There is a strictly decreasing sequence \((x_n)_{n \in \mathbb{N}}\) of positive real numbers such that 
\[
\lim_{n \to \infty} x_n = 0
\]
holds and the equivalence 
\[
(x \in A) \Leftrightarrow (x = 0 \text{ or } \exists n \in \mathbb{N} \colon x_n = x)
\]
is valid for every \(x \in \RR^{+}\).
\end{enumerate}
\end{lemma}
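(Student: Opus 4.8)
The plan is to prove the cycle of implications \(\ref{c6.9:s1} \Rightarrow \ref{c6.9:s2} \Rightarrow \ref{c6.9:s3} \Rightarrow \ref{c6.9:s1}\). The implication \(\ref{c6.9:s1} \Rightarrow \ref{c6.9:s2}\) is immediate, since every compact metric space is totally bounded.

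For \(\ref{c6.9:s2} \Rightarrow \ref{c6.9:s3}\), let \(X\) be an infinite totally bounded ultrametric space. The heart of the argument is the claim that for every \(\varepsilon > 0\) the set \(D(X) \cap [\varepsilon, \infty)\) is finite. To see this I would fix \(\varepsilon > 0\) and choose, by total boundedness, a finite set \(\{x_1, \dots, x_n\} \subseteq X\) such that the balls \(B_i = \{x \in X \colon d(x_i, x) < \varepsilon\}\) cover \(X\). The strong triangle inequality shows that \(d(x, y) < \varepsilon\) whenever \(x\), \(y\) lie in the same \(B_i\); and if \(x \in B_i\), \(y \in B_j\) with \(d(x, y) \geqslant \varepsilon\), then two applications of the strong triangle inequality (each time one of the two ``legs'' has length \(< \varepsilon \leqslant d(x,y)\), so the other two distances coincide) give \(d(x, y) = d(x_i, x_j)\). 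Hence \(D(X) \cap [\varepsilon, \infty) \subseteq \{d(x_i, x_j) \colon 1 \leqslant i, j \leqslant n\}\), a finite set. Next I would note that \(D(X)\) must be infinite: otherwise, taking \(\varepsilon\) strictly below the least positive element of \(D(X)\) would force every \(\varepsilon\)-ball to be a singleton, contradicting total boundedness of the infinite space \(X\). Since \(0 \in D(X)\) and \(D(X) \setminus \{0\}\) is an infinite subset of \(\RR^{+}\) meeting each \([\varepsilon, \infty)\) in a finite set, it can be listed as the range of a strictly decreasing sequence \((x_n)_{n \in \NN}\) with \(\lim_{n \to \infty} x_n = 0\), which is exactly \(\ref{c6.9:s3}\).

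For \(\ref{c6.9:s3} \Rightarrow \ref{c6.9:s1}\), given the sequence \((x_n)\), I would put \(X = \{0\} \cup \{x_n \colon n \in \NN\}\) and define \(d \colon X \times X \to \RR^{+}\) by \(d(x, x) = 0\) and \(d(x, y) = \max\{x, y\}\) for \(x \neq y\) (the same recipe as in Example~\ref{ex5.52}); a direct check gives that \(d\) is an ultrametric. Since \(x_n = d(x_n, 0)\) for every \(n\), and since \(\max\{x_n, x_m\}\) always equals some \(x_k\), one gets \(D(X, d) = \{0\} \cup \{x_n \colon n \in \NN\} = A\). It then remains to verify that \((X, d)\) is compact, i.e.\ complete and totally bounded: total boundedness holds because, given \(\varepsilon > 0\) and \(N\) with \(x_N < \varepsilon\), the finite set \(\{0, x_1, \dots, x_{N-1}\}\) is an \(\varepsilon\)-net (as \(d(0, x_n) = x_n < \varepsilon\) for \(n \geqslant N\)); completeness holds because a Cauchy sequence in \((X, d)\) either is eventually constant or takes infinitely many values, which by the definition of \(d\) must tend to \(0 \in X\). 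As \(X\) is infinite, this yields \(\ref{c6.9:s1}\).

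The main obstacle is the finiteness of \(D(X) \cap [\varepsilon, \infty)\) in the step \(\ref{c6.9:s2} \Rightarrow \ref{c6.9:s3}\): this is where the strong triangle inequality is genuinely used (the assertion fails for general totally bounded metric spaces, e.g.\ \([0,1]\) with the usual metric), and some care is needed to combine it with the infinitude of \(D(X)\) to extract the explicit sequential description in \(\ref{c6.9:s3}\). The remaining steps are routine verifications.
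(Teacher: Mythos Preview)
Your argument is correct. You prove the cycle \(\ref{c6.9:s1} \Rightarrow \ref{c6.9:s2} \Rightarrow \ref{c6.9:s3} \Rightarrow \ref{c6.9:s1}\) by direct, self-contained verification, whereas the paper proceeds differently: it obtains \(\ref{c6.9:s1} \Rightarrow \ref{c6.9:s3}\) by citing Proposition~19.2 of Schikhof, handles the equivalence \(\ref{c6.9:s1} \Leftrightarrow \ref{c6.9:s2}\) via the completion (a metric space is totally bounded iff its completion is compact) together with the ``no new values of the ultrametric after completion'' principle, and for \(\ref{c6.9:s3} \Rightarrow \ref{c6.9:s1}\) uses the same \(\max\)-ultrametric construction you do but without spelling out the compactness check. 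The key conceptual difference is your step \(\ref{c6.9:s2} \Rightarrow \ref{c6.9:s3}\): rather than passing through the compact completion and invoking a black-box description of distance sets of compact ultrametric spaces, you extract the finiteness of \(D(X) \cap [\varepsilon, \infty)\) directly from total boundedness and the strong triangle inequality. This buys you a fully elementary proof with no external references, and it makes transparent exactly where ultrametricity enters (your remark that the step fails for \([0,1]\) with the Euclidean metric is to the point). The paper's approach is shorter on the page but relies on results the reader must look up elsewhere.
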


\begin{proof}
The validity of \(\ref{c6.9:s1} \Rightarrow \ref{c6.9:s3}\) follows from Proposition~19.2 \cite[p.~51]{Sch1985}. 

Since a metric space is totally bounded if and only if the completion of this space is compact (see Corollary~4.3.30 in \cite{Eng1989}), the principle: ``No new values of the ultrametric after completion'' implies the validity of \(\ref{c6.9:s1} \Leftrightarrow \ref{c6.9:s2}\). This principle can be found, for example, in \cite[p.~4]{PerezGarcia2010}.

To complete the proof it suffices to note that if \(A\) satisfies condition \ref{c6.9:s3}, \(X = A\) and \(d \colon X \times X \to \RR^{+}\) is an ultrametric defined as
\[
d(x, y) = \begin{cases}
0 & \text{if } x = y,\\
\max\{x, y\} & \text{if } x \neq y,
\end{cases}
\]
then \((X, d)\) is a compact ultrametric space for which \(D(X, d) = A\) holds.
\end{proof}

\begin{corollary}\label{c5.35}
Let \(X\) be a nonempty totally bounded ultrametric space. Then, for every weak similarity \(\Phi \colon Z \to X\) of an ultrametric space \(Z\) and the ultrametric space \(X\), there is a strictly increasing ultrametric preserving function \(g \colon \RR^{+} \to \RR^{+}\) such that
\[
g|_{D(X)} = \psi,
\]
where \(\psi \colon D(X) \to D(Z)\) is the scaling function of \(\Phi\).
\end{corollary}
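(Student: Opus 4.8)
The plan is to reduce the statement to the already established Corollary~\ref{c5.38} (equivalently, to Theorem~\ref{t5.34}) by showing that the distance set \(D(X)\) of a nonempty totally bounded ultrametric space \(X\) is always a closed subset of \(\RR^{+}\). Once this is done, the corollary is immediate, since \(Z\) and \(X\) are weakly similar (\(\Phi\) being a weak similarity), and \(Z\) is nonempty as the domain of a bijection onto the nonempty set \(X\).

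First I would split into two cases according to the cardinality of \(X\). If \(X\) is finite, then \(D(X)\) is a finite subset of \(\RR^{+}\) and is therefore closed. If \(X\) is infinite, then \(X\) is an infinite totally bounded ultrametric space, so Lemma~\ref{c6.9}, via the implication \ref{c6.9:s2}\(\Rightarrow\)\ref{c6.9:s3}, produces a strictly decreasing sequence \((x_n)_{n \in \NN}\) of positive reals with \(\lim_{n \to \infty} x_n = 0\) such that \(D(X) = \{0\} \cup \{x_n \colon n \in \NN\}\). A set of this shape is closed in \(\RR^{+}\), because its unique accumulation point, the number \(0\), belongs to it. Hence in both cases \(D(X)\) is a closed subset of \(\RR^{+}\).

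Having established that \(D(X)\) is closed, I would conclude by invoking Corollary~\ref{c5.38} applied to the weakly similar spaces \(Z\) and \(X\): for the given weak similarity \(\Phi \colon Z \to X\), the scaling function \(\psi \colon D(X) \to D(Z)\) admits a continuation to a strictly increasing ultrametric preserving function \(g \colon \RR^{+} \to \RR^{+}\), that is, \(g|_{D(X)} = \psi\). (Alternatively, one can bypass Corollary~\ref{c5.38}: since \(\RR^{+} \setminus D(X)\) is open in \(\RR\), each of its components is an open interval, so condition \ref{t5.34:s1} of Theorem~\ref{t5.34} holds, and condition \ref{t5.34:s2} yields the desired \(g\).) I do not foresee any genuine obstacle: the sole nonformal ingredient is the description of distance sets of totally bounded ultrametric spaces given by Lemma~\ref{c6.9}, and the remainder is the elementary remark that finite sets, and sequences converging to an included limit, are closed, followed by a citation of the previously proved corollary.
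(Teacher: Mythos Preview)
Your proof is correct and follows exactly the paper's approach: the paper's proof is the single sentence ``It follows from Corollary~\ref{c5.38} and Lemma~\ref{c6.9},'' and you have spelled out precisely the intended reasoning---use Lemma~\ref{c6.9} (together with the trivial finite case) to see that \(D(X)\) is closed in \(\RR^{+}\), then apply Corollary~\ref{c5.38}.
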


\begin{proof}
It follows from Corollary~\ref{c5.38} and Lemma~\ref{c6.9}.
\end{proof}

\begin{example}\label{ex5.30}
Let \(X\) be a subset of \([0, 1]\) such that
\[
X = \left\{\frac{1}{n} \colon n \in \mathbb{N}\right\}
\]
and let \(d \colon X \times X \to \RR^{+}\) and \(\delta \colon X \times X \to \RR^{+}\) be ultrametrics on \(X\) defined for all \(x\), \(y \in X\) as
\[
d(x, y) = \begin{cases}
0 & \text{if } x = y,\\
\max\{x^2, y^2\} & \text{if } x \neq y
\end{cases}
\]
and, respectively,
\begin{equation}\label{ex5.30:e1}
\delta(x, y) = \begin{cases}
0 & \text{if } x = y,\\
1 + \max\{x, y\} & \text{if } x \neq y.
\end{cases}
\end{equation}
Then \((X, d)\) and \((X, \delta)\) are weakly similar, the identical mapping \(\operatorname{Id} \colon X \to X\) is a (unique) weak similarity of \((X, d)\) and \((X, \delta)\), and the equality \(d(x, y) = f(\delta(x, y))\) holds for all \(x\), \(y \in X\) with the scaling function \(f \colon D(X, \delta) \to D(X, d)\) such that
\begin{equation}\label{ex5.30:e2}
D(X, d) = \{0\} \cup \left\{\frac{1}{n^2} \colon n \in \mathbb{N}\right\}, \quad D(X, \delta) = \{0\} \cup \left\{1 + \frac{1}{n} \colon n \in \mathbb{N}\right\}
\end{equation}
and
\begin{equation}\label{ex5.30:e3}
f(t) = \begin{cases}
0 & \text{if } t = 0,\\
(t-1)^2 & \text{if } t \in D(X, \delta) \setminus \{0\}.
\end{cases}
\end{equation}
It is interesting to note that the ultrametric space \((X, \delta)\) is not totally bounded because \(|X| = \aleph_0\) and \(\delta(x, y) \geqslant 1\) whenever \(x \neq y\). In addition, there is no ultrametric preserving function \(\varphi \colon \RR^{+} \to \RR^{+}\) such that
\begin{equation}\label{ex5.30:e4}
\varphi|_{D(X, \delta)} = f.
\end{equation}
Indeed, if \(\varphi \colon \RR^{+} \to \RR^{+}\) is increasing and \eqref{ex5.30:e4} holds, then using \eqref{ex5.30:e2} and \eqref{ex5.30:e3} we obtain the equality \(\varphi(t) = 0\) for all \(t \in [0, 1]\). Thus, \(\varphi\) is not ultrametric preserving by Pongsriiam---Termwuttipong theorem. 

By Lemma~\ref{l5.29}, there is the inverse function \(\psi \colon D(X, d) \to D(X, \delta)\) for the scaling function \(f\). It is easy to show that
\begin{equation}\label{ex5.30:e5}
\psi(s) = \begin{cases}
0 & \text{if } s = 0,\\
1 + \sqrt{s} & \text{if } s \in D(X, d) \setminus \{0\}.
\end{cases}
\end{equation}
In the proof of Theorem~\ref{t5.34} it was used the piecewise linear expansion of \(\psi\) to a strictly increasing ultrametric preserving function \(g\) depicted in Figure~\ref{fig14} below.
\end{example}

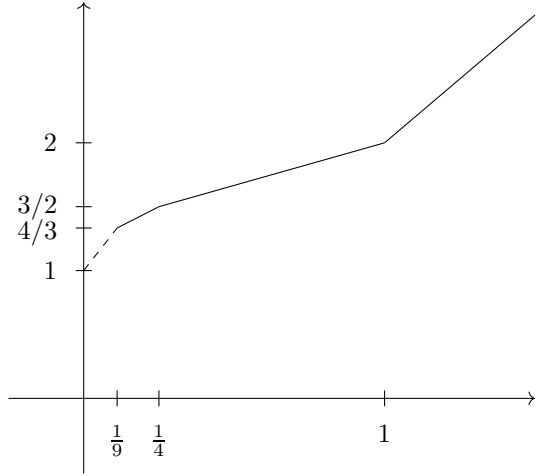
\begin{figure}[ht]
\begin{center}
\begin{tikzpicture}
\def\dx{4cm}
\def\dy{1.7cm}
\def\de{3pt}
\draw [->] (-1, 0) -- (1.5*\dx, 0);
\draw [->] (0, -1) -- (0, 3.1*\dy);
\draw (\dx, -\de) node [label = below:{\(1\)}] {} -- (\dx, \de);
\draw (-\de, 2*\dy) node [label = left:{\(2\)}] {} -- (\de, 2*\dy);
\draw (\dx, 2*\dy) -- (1.5*\dx, 3*\dy);

\draw (\dx/4, -\de) node [label = below:{\(\frac{1}{4}\)}] {} -- (\dx/4, \de);
\draw (\dx/4, 3*\dy/2) -- (\dx, 2*\dy);
\draw (-\de, 3*\dy/2) node [label = left:{\(3/2\)}] {} -- (\de, 3*\dy/2);

\draw (\dx/9, -\de) node [label = below:{\(\frac{1}{9}\)}] {} -- (\dx/9, \de);
\draw (\dx/9, 4*\dy/3) -- (\dx/4, 3*\dy/2);
\draw (-\de, 4*\dy/3) -- (\de, 4*\dy/3);
\draw (-\de, 3.9*\dy/3) node [label = left:{\(4/3\)}] {};

\draw [dashed] (0, \dy) -- (\dx/9, 4*\dy/3);
\draw (-\de, \dy) node [label = left:{\(1\)}] {} -- (\de, \dy);
\end{tikzpicture}
\end{center}
\caption{The graph of strictly increasing ultrametric preserving function \(g \colon \RR^{+} \to \RR^{+}\) corresponding to the scaling function \(\psi\) defined by rule \eqref{ex5.30:e5}.}
\label{fig14}
\end{figure}

Theorems~\ref{t5.46}--\ref{t5.34} can be considered as reformulations of some results guaranteeing, in the language of ultrametric spaces, the existence of isotonic extension of given isotonic mappings. For example, Theorem~\ref{t5.46} is a simple corollary of the following.

\begin{theorem}\label{t5.57}
Let \(D\) be a subset of \(\RR^{+}\) such that \(0 \in D\). Then the following conditions are equivalent:
\begin{enumerate}
\item \label{t5.57:s1} For every strictly increasing function \(\psi \colon D \to \RR^{+}\) with \(\psi(0) = 0\) there is an increasing function \(g \colon \RR^{+} \to \RR^{+}\) such that \(g|_{D} = \psi\).
\item \label{t5.57:s2} The poset \((D, {\leqslant})\) has a largest element or \(D\) is a confinal subset of \((\RR^{+}, {\leqslant})\). 
\end{enumerate}
\end{theorem}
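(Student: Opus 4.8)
The plan is to establish the two implications separately. For $(\ref{t5.57:s2}) \Rightarrow (\ref{t5.57:s1})$ I would use the same supremum recipe already appearing in the proof of Theorem~\ref{t5.46}: given a strictly increasing $\psi \colon D \to \RR^{+}$ with $\psi(0) = 0$, put
\[
g(t) = \sup\{\psi(s) \colon s \in D \cap [0, t]\}, \qquad t \in \RR^{+},
\]
which makes sense because $0 \in D$, so $D \cap [0, t] \neq \varnothing$. This $g$ is automatically increasing, and since $\psi$ is increasing one gets $g(t) = \psi(t)$ for every $t \in D$, i.e.\ $g|_{D} = \psi$. The only point where condition~\ref{t5.57:s2} is needed is the finiteness of $g$: if $D$ has a largest element $m$, then $g(t) \leqslant \psi(m) < \infty$ for all $t$; if $D$ is cofinal in $(\RR^{+}, {\leqslant})$, then for each $t$ one chooses $s_{0} \in D$ with $s_{0} \geqslant t$ and obtains $g(t) \leqslant \psi(s_{0}) < \infty$. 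In either case $g \colon \RR^{+} \to \RR^{+}$ is the desired extension.

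For $(\ref{t5.57:s1}) \Rightarrow (\ref{t5.57:s2})$ I would argue by contraposition. Assume $(D, {\leqslant})$ has no largest element and $D$ is not cofinal in $\RR^{+}$. Non-cofinality gives some $c \in (0, \infty)$ with $D \subseteq [0, c)$, so $b := \sup D$ is finite; having no largest element forces $b \notin D$, and $0 \in D$ together with $D \neq \{0\}$ forces $b > 0$. Moreover $\sup D = b$ just says that $D$ is cofinal in $[0, b)$. Now take the strictly increasing function $h \colon [0, b) \to \RR^{+}$, $h(s) = \tfrac{s}{b - s}$ (the function~\eqref{t2.25:e7} with $d^{*} = b$), which satisfies $h(0) = 0$ and $\lim_{s \to b^{-}} h(s) = +\infty$, and set $\psi = h|_{D}$. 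Then $\psi$ is strictly increasing with $\psi(0) = 0$, but $\sup_{s \in D} \psi(s) = \lim_{s \to b^{-}} h(s) = +\infty$. If an increasing $g \colon \RR^{+} \to \RR^{+}$ satisfied $g|_{D} = \psi$, then $g(b) \geqslant g(s) = \psi(s)$ for all $s \in D$, forcing $g(b) = +\infty$, which contradicts $g(b) \in \RR^{+}$. Hence no such extension exists and \ref{t5.57:s1} fails.

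I expect the whole argument to be essentially routine; the one place requiring a small idea is the contrapositive step, where one must recognize that the simultaneous failure of both alternatives in~\ref{t5.57:s2} is precisely the statement that $D$ accumulates at its own finite supremum $b$ with $b \notin D$, and then use a function blowing up at $b$ to obstruct any monotone extension across $b$. Everything else — the monotonicity of $g$, the identity $g|_{D} = \psi$, and the finiteness bookkeeping — is straightforward manipulation of suprema.
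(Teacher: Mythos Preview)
The paper does not actually supply a proof of Theorem~\ref{t5.57}; it is stated without argument, immediately after the remark that Theorem~\ref{t5.46} is a simple corollary of it. Your proof is correct and is exactly in the spirit of the paper's methods: the supremum formula you use for the direction $\ref{t5.57:s2} \Rightarrow \ref{t5.57:s1}$ is the same as the one appearing in the proof of Theorem~\ref{t5.46} (equation~\eqref{t5.46:e2}), and the blow-up function $s \mapsto s/(b-s)$ you use for the contrapositive direction is precisely the function~\eqref{t2.25:e7} from Theorem~\ref{t2.25}. One tiny point you could make explicit is that the assumption ``$D$ has no largest element'' already forces $D \neq \{0\}$ (since $0$ would otherwise be largest), which is what justifies $b > 0$; otherwise the argument is complete.
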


The above theorem admits also future generalizations for partially ordered sets and, in particular, for different types of semilattices and lattices \cite{GLPFM1970, SorMSN1952, DovJMSUS2020, BGAU2012, BNSPRSESA2003, FofMN1969}. We note only that lattices are a special kind of ordered sets which play a fundamental role in the theory of ultrametric spaces \cite{Lem2003AU}.

In conclusion, we want to formulate an open problem, which is in a sense dual to Problem~\ref{pr1.13}.

\begin{problem}\label{pr2.20}
Describe the ultrametric spaces \((X, d)\) for which every \((Z, \delta) \in WS(X, d)\) and every weak similarity \(\Phi \colon X \to Z\) admit an ultrametric preserving (pseudoultrametric preserving or strictly increasing ultrametric preserving) function \(f \colon \RR^{+} \to \RR^{+}\) such that the diagram
\begin{equation*}
\ctdiagram{
\ctv 0,25:{Z \times Z}
\ctv 100,25:{D(Z)}
\ctv 200,25:{\RR^{+}}
\ctv 0,-25:{X \times X}
\ctv 100,-25:{D(X)}
\ctv 200,-25:{\RR^{+}}
\ctet 0,25,100,25:{\delta}
\ctet 100,25,200,25:{\Id_{D(Z)}}
\ctet 0,-25,100,-25:{d}
\ctet 100,-25,200,-25:{\Id_{D(X)}}
\ctel 0,-25,0,25:{\Phi \otimes \Phi}
\ctel 100,25,100,-25:{\psi}
\cter 200,25,200,-25:{f}
}
\end{equation*}
is commutative, where \(\psi\) is the scaling function of \(\Phi\) and \(\Id_{D(X)}\), \(\Id_{D(Z)}\) are the identical embeddings of the distance sets \(D(X)\) and, respectively, \(D(Z)\) in \(\RR^{+}\).
\end{problem}


\end{document}